\numberwithin{equation}{section}
\newcolumntype{C}{>{\centering\arraybackslash$}p{1cm}<{$}}
\newcolumntype{D}{>{\centering\arraybackslash$}p{1.9cm}<{$}}
\newcommand{\RR}{\mathbb{R}}
\newcommand{\NN}{\mathbb{N}}
\newcommand{\Sp}{\mathcal{X}}
\newcommand{\C}{\mathcal{C}}
\newcommand{\R}{\mathcal{R}}
\newcommand{\Rr}{\mathcal{R}^r}
\newcommand{\Rl}{\mathcal{R}^\infty}
\newcommand{\Cl}{\mathcal{C}^\infty}
\newcommand{\betar}{\beta^r}
\newcommand{\lambdal}{\lambda^\infty}
\newcommand{\lambdarN}{\lambda^{r,N}}
\newcommand{\mlambdarN}{\widetilde{\lambda}^{r,N}}
\newcommand{\mlambdal}{\widetilde{\lambda}^\infty}
\newcommand{\K}{\mathcal{K}}
\newcommand{\Kl}{\mathcal{K}^\infty}
\newcommand{\pr}[1]{\left( #1 \right)}
\newcommand{\prg}[1]{\left\{ #1 \right\}}
\newcommand{\is}{H}
\newcommand{\In}{\mathcal{V}}
\newcommand{\FP}{\mathcal{W}}
\newcommand{\SR}{\mathcal{U}}
\newcommand{\fp}{W}
\newcommand{\ii}{V}
\newcommand{\sr}{U}
\newcommand{\La}{L}
\newcommand{\xin}{\check{x}}
\newcommand{\xni}{\hat{x}}
\newcommand{\vkappaj}{\kappa_{\cdot j}}
\newcommand{\rti}{\overset{\In}{\Rightarrow}}
\title{Uniform Approximation of Solutions by Elimination of Intermediate Species in Deterministic Reaction Networks}
\author{Daniele Cappelletti\footnotemark[1] \and Carsten Wiuf\footnotemark[1]}
\date{}
\newtheorem{theorem}{Theorem}[section]
\newtheorem{lemma}[theorem]{Lemma}
\newtheorem{proposition}[theorem]{Proposition}
\theoremstyle{remark}
\newtheorem{remark}{Remark}[section]
\newtheorem{example}{Example}
\theoremstyle{definition}
\newtheorem{definition}{Definition}[section]
\newtheorem{assumption}{Assumption}
\begin{document}

 \footnotetext[1]{Department of Mathematical Sciences, University of Copenhagen. The authors are supported by the Danish Research Council and the Lundbeck Foundation, Denmark.}

 \tikzset{every node/.style={auto}}
 \tikzset{every state/.style={rectangle, minimum size=0pt, draw=none, font=\normalsize}}
 \tikzset{bend angle=20}

 \maketitle

 \begin{abstract}
Chemical reactions often proceed through the formation and the consumption of intermediate species. An example is the creation and subsequent degradation of the substrate-enzyme complexes in an enzymatic reaction. In this paper we provide a setting, based on ordinary differential equations, in which the presence of intermediate species has little effect on the overall dynamics of a biological system. The result provides a method to perform model reduction by elimination of intermediate species. We study the problem in a multiscale setting, where the species abundances as well a the reaction rates scale to different orders of magnitudes. The different time and concentration scales are parameterised by a single parameter $N$.  We show that a solution to the original reaction  system is uniformly approximated on compact time intervals to a solution of a reduced reaction system without intermediates and to a solution of  a certain limiting reaction systems, which does not depend on $N$.
   Known approximation techniques such as the theorems by Tikhonov and Fenichel cannot readily be used in this framework.
 \end{abstract}

 \section{Introduction}
  
  Dynamical models of chemical reaction systems in biology and biochemistry have a history of more than one hundred years. Today such models are playing a crucial role in understanding the dynamic behaviour of biological and cellular systems, such as signalling pathways and the circadian clock. These  systems are typically large with reactions running at different time scales and species abundances  spanning several orders of magnitude.  
The  choice of model typically reflects this.  If the species are in low abundance, stochastic fluctuations should not be ignored and the preferred model is often a continuous-time Markov chain, where the variables are the molecular counts of  each species. In contrast,  if the species are in large abundance, it is custom to consider the concentrations of the  species, rather than the counts,  and the dynamics is modelled as a  system of ordinary differential equations (ODEs). In this paper, we are concerned about the latter class of models. 

In many situations there is an interest in reducing an ODE  system to a smaller system, either because the original system is mathematically and computationally intractable or because the full original system obscures the  essential biological aspects of the system \cite{cornish:enz-kinetics}. The quasi-steady state approximation and rapid equilibrium assumption are commonly used  techniques to reduce a system by making heuristic arguments about fast/slow reactions and fast/slow species (in our context it would be more correct to say high/low concentrations of species) \cite{cornish:enz-kinetics}. When applying time-scale separation, the species are typically divided into fast and slow species, and based on this division  a reduced ODE system is deduced with only the slow species.  This is often done without mathematical justification. However, Tikhonov's theorem (and similar theorems) often applies and allows us to conclude that the trajectories of the original system is uniformly approximated by the trajectories  of the reduced system on compact time intervals \cite{tikhonov,fenichel,segal-slemrod,goeke-walcher}.

  The present paper addresses at the same time, the issue of time-scale separation by means of high/low concentrations  as well as by fast/slow reactions, in the specific context of  reaction networks with intermediate species.
  Intermediate species  are  transient species in a reaction pathway, which are  produced and subsequently degraded in the course of time (a formal definition is given in Section \ref{sec:def}).  A well known example of an intermediate species is the substrate-enzyme complex $ES$ in the Michaelis-Menten mechanism,
 \begin{equation}\label{eq:michaelis_menten}
   \begin{tikzpicture}[baseline={(current bounding box.center)}]
    %states
    \node[state] (E+S) at (0,0) {$E+S$};
    \node[state] (ES) at (3,0) {$ES$};
    \node[state] (E+P) at (5,0) {$E+P$,};
    %edges
    \path[->] 
     (E+S) edge[bend left] node[pos=.45]{$\kappa_1$} (ES)
     (ES) edge[bend left] node[pos=.55]{$\kappa_2$} (E+S)
           edge            node{$\kappa_3$} (E+P);
   \end{tikzpicture}
  \end{equation}
  but more complex intermediate structures might be considered as well.  The constants $\kappa_1$, $\kappa_2$ and $\kappa_3$ are rate constants and relate to the propensity of the reactions to occur. Various effects of the presence of intermediate species in reaction networks have recently  been studied; for example in relation to the number of steady states  \cite{gunawardena:unlimited,feliu:intermediates}, persistence \cite{freitas:persistence}, and approximation of stochastic trajectories \cite{cappelletti:intermediates}.
 
 To illustrate the scope of our results, consider the following system:
  \begin{equation}\label{eq:example}
  \begin{tikzpicture}[baseline={(current bounding box.center)}]
   %states
   \node[state] (E+S) at (0,2) {$E+S$};
   \node[state] (\is_1) at (2,2) {$\is_1$};
   \node[state] (\is_2) at (2,0) {$\is_2$};
   \node[state] (E+P) at (4,2) {$E+P$};
   %edges
   \path[->] 
    (E+S) edge node{$\kappa_1$} (\is_1)
    (\is_1) edge[bend left] node{$N^3\kappa_2$} (\is_2)
    (\is_2) edge[bend left] node{$ N^4\kappa_3$} (\is_1)
    (\is_1) edge node{$ N^2\kappa_4$} (E+P);
  \end{tikzpicture}
 \end{equation}
 Here, the species $\is_1$ and $\is_2$ are intermediate species, whose production and consumption delay the  enzymatic reaction that transforms a substrate $S$ into a product $P$.  If we denote by $S(t)$ the concentration of the substrate at time $t$ and so on, then the  dynamics of the system %\eqref{eq:example}
  is modelled by (assuming mass-action kinetics)
 \begin{align*}\label{eq:system_ode_example}
  \frac{d}{dt}S(t)&=-\kappa_1 E(t)S(t) \\
  \frac{d}{dt}E(t)&=-\kappa_1 E(t)S(t)+N^2\kappa_4\is_1(t) \\
  \frac{d}{dt}P(t)&=N^2\kappa_4\is_1(t)\\
  \frac{d}{dt}\is_1(t)&=\kappa_1 E(t)S(t)-N^3\kappa_2\is_1(t)+N^4\kappa_3\is_2(t)-N^2\kappa_4\is_1(t)\\
  \frac{d}{dt}\is_2(t)&=N^3\kappa_2\is_1(t)-N^4\kappa_3\is_2(t)
 \end{align*}
The number $N$ is supposed to be large, representing the separation in time-scales between reaction rates.  If $N$ is large, then the presence of the intermediate species $\is_1$ and $\is_2$ does not de facto  delay the production of the product $P$, since the intermediate species are consumed almost immediately after production. Our result allows us to  approximate the trajctories of the system \eqref{eq:example}, uniformly on  compact time intervals, by that of the following reduced reaction system
  \begin{equation}\label{eq:example_reduced}
  \begin{tikzpicture}[baseline={(current bounding box.center)}]
   %states
   \node[state] (E+S) at (0,2) {$E+S$};
   \node[state] (E+P) at (2,2) {$E+P$};
   %edges
   \path[->] 
    (E+S) edge node{$\kappa_1$} (E+P);
  \end{tikzpicture}
 \end{equation}
 (also with mass-action kinetics). 
 
  In order to describe the dynamics of biologically realistic systems, we  consider a multiscale framework where not only the reaction rates scale, but also the species abundances scale in orders of $N$. For example, the reduction we performed for \eqref{eq:example} is still valid if the   substrate concentration  is of order $O(N)$, the enzyme concentration is of order $O(1)$ and the product concentration is of order $O(N^\gamma)$, where $\gamma\geq1$. As a consequence, the degradation rate of the substrate  and the enzyme is also of order $O(N)$. It is worth pointing out, that while the reaction rates that depend on $N$ in general will increase with $N$, our approach also allows for some reaction rates to become arbitrary small, as in Example \ref{ex:mu_to_infty} below.

All time and concentration scales are parameterised by a single parameter $N$, which corresponds to $1/\varepsilon$ in the context of Tikhonov's approach. In contrast to  previous work on reduction of deterministic reaction networks, our approach identifies a proper reduced reaction network as in \eqref{eq:example_reduced}, with rates potentially depending on $N$, and a limiting reaction network with rates independent on $N$; and not only an ODE system 
 approximating the species behaviour. In the case discussed above, the two reaction networks coincide, together with their kinetics, but  we will see examples where this is not the case. Furthermore, we prove uniform convergence of the trajectories of the original network to those of the reduced, as well as to those of the limiting reaction network, on compact time intervals.  In the particular case of \eqref{eq:example}, the reduction cannot be performed using Tikhonov's theorem.   The problem resides in the fact that the fast reactions are not of the same order of magnitude  
 and this cannot be accommodated in the setting of Tikhonov. Thus, it is not sufficient to categorise the reactions (or variables) as fast and slow, but the exact order of each reaction is important. For example, if the rate constant of $\is_2\to\is_1$ is changed from $N^4\kappa_3$ to $N\kappa_3$, then the intermediate structure causes a significant delay and \eqref{eq:example} cannot be reduced to \eqref{eq:example_reduced} (in this case, Assumption \ref{ass:rate_to_zero} below does not hold). This is caused by the cycle between the intermediate species $\is_1$ and $\is_2$.
 
 The paper is organised as follows. Section \ref{sec:def} contains background material on reaction networks and definitions. Section \ref{sec:int_effect} discusses intermediate species and some mathematical consequences of introducing reactions including intermediate species. In Section \ref{sec:multiscale}-\ref{sec:equivalent} we introduce the multiscale setting and the reduced reaction network, and in Section \ref{sec:limiting}, the limiting reaction network is introduced. The convergence results are stated in Section \ref{sec:theorems} with the proofs postponed to Section \ref{sec:proofs}. Section \ref{sec:discussion} relates our approach to other approaches and discusses problems arising when considering long term behaviours. Our approach is inspired by related work for stochastic reaction networks \cite{popovic:rescale, cappelletti:intermediates}.

 \section{Definitions and background}\label{sec:def}
  
 In this section we introduce  definitions and background material; for more details on reaction systems, see for example \cite{feinberg:review} and \cite{erdi:mathematical_models}.
 
  Let $\RR$, $\RR_{>0}$, and $\RR_{\geq0}$ be the set of real, positive real and non-negative real numbers, respectively. Also let $\NN$ be the set of natural numbers including 0. For any vector $v\in\RR^p$, we let $v_i$ be the $i$th component of $v$  and $\|v\|$ the Euclidean norm. We denote by $e$ the vector with all entries equal to one and by $e_i$, the $i$th unit vector. If $M$ is matrix (or vector), $M^\top$ denotes the transpose of $M$.
  Furthermore, for any set $A$, $|A|$ denotes the cardinality of $A$, and for any two sets $A$ and $B$, we let $A\setminus B$ be the set of elements that are  in $A$, but not in $B$.
 If  $u,v\in\RR^p$ are vectors and $N>0$ a scalar, then  $N^u$ denotes the vector with entries $N^{u_i}$ and  $N^uv$ denotes the vector with entries $N^{u_i}v_i$. 
 Finally, if $g, f\colon \NN\to\RR$ are functions, then $g(N)=O(f(N))$ denotes that $g$ is of order at most that of $f$, that is, $\limsup_{N\to \infty} |g(N)/f(N)|<\infty$, and $g(N)=\Theta(f(N))$ denotes that $g$ is of the same order as $f$, that is,  $0<\liminf_{N\to \infty} |g(N)/f(N)|$ and $\limsup_{N\to \infty} |g(N)/f(N)|<\infty$.
 
  A reaction network is a triple $(\Sp,\C,\R)$, where $\Sp$ is an ordered set $(S_k)_{1\leq k\leq |\Sp|}$, $\C$ is an ordered set $(y_i)_{1\leq i\leq |\C|}$ of linear combinations of elements of $\Sp$ on $\NN$, and $\R$ is a subset of $\C\times\C$, such that $(y_i,y_i)\notin\R$ for all  $y_i\in\C$. The elements of the set $\Sp$ are called species, the elements of the set $\C$ are called complexes, and  the elements of $\R$ are called reactions. The complexes are identified as vectors in $\RR^{|\Sp|}$. A reaction $(y_i,y_j)\in\R$ is denoted by $y_i\to y_j$.  
In  \eqref{eq:example}, there are $5$ species ($S,E,\is_1,\is_2,P$), $4$ complexes ($S+E, \is_1, \is_2, P+E$), and $4$ reactions. 

 The evolution of the species concentrations $x(t)\in\RR^n_{\geq0}$  for $t\ge 0$ is modelled as the solution to the ODE system
 \begin{equation}\label{eq:ODEplain}
 \frac{d}{dt}x(t)=\sum_{y_i\to y_j\in\R}(y_j-y_i)\lambda_{ij}(x(t)),
 \end{equation}
 with initial condition $x(0)\in\RR^n_{\ge 0}$,
 for some non-negative, non-zero functions $\lambda_{ij}\colon\RR^{|\Sp|}_{\geq0}\to\RR_{\ge 0}$, fulfilling that  $\lambda_{ij}(x)>0$ implies  $ x_k>0$, whenever $y_{ik}>0$, $1\le k\le |\Sp|.$  The latter assumption requires that a reaction only occurs in the presence of the involved species,  and therefore prevents the concentrations from becoming negative. 
  The functions $\lambda_{ij}(x)$ are called \emph{rate functions}, together they constitute a \emph{kinetics} $\K$ for $(\Sp,\C,\R)$, and the quadruple $(\Sp,\C,\R,\K)$ is called a \emph{(deterministic) reaction system}.  If 
 \begin{equation}\label{eq:mass-action}
 \lambda_{ij}(x)=\kappa_{ij}\prod_{k=1}^{|\Sp|}x_k^{y_{ik}}
 \end{equation}
  for all reactions, then the constants $\kappa_{ij}$ are referred to as \emph{rate constants} and the modelling regime  as \emph{(deterministic) mass-action kinetics}. In this case, the quadruple  $(\Sp,\C,\R,K)$ is called a \emph{(deterministic) mass-action system}.
  
For convenience, we define $\lambda_{ij}(x)=0$ whenever $y_i\to y_j$ is not a reaction in $\R$, in which case \eqref{eq:ODEplain} becomes
 \begin{equation*}
  \frac{d}{dt}x(t)=\sum_{1\leq i,j\leq |\C|}(y_j-y_i)\lambda_{ij}(x(t)),
 \end{equation*}
  
  Finally, we define intermediate species as in \cite{feliu:intermediates}. 

 \begin{definition}\label{def:intermediate_species}
  Let $(\Sp,\C,\R)$ be a reaction network and $\In=(\is_\ell)_{\ell\in\ii}$ be a subset of $\Sp$. We say that the species in $\In$ are \emph{intermediate species} (or simply \emph{intermediates}) if the following conditions hold:
  \begin{itemize}
   \item for each $\is_\ell\in\In$, the only complex involving $\is_\ell$ is $\is_\ell$ itself.
   (This implies that $\In\subseteq\C$.)
   \item for each $\is_\ell\in\In$, there is a directed path of complexes, such that 
   $$y_i\rightarrow \is_{\ell_1}\rightarrow\dots\rightarrow \is_\ell\rightarrow\dots\rightarrow \is_{\ell_n}\rightarrow y_j$$
  with $y_i, y_j\in\C\setminus\In$ and $\is_{\ell_m}\in\In$ for all $1\leq m\leq n$.
  \end{itemize}
 \end{definition}

 According to the Definition \ref{def:intermediate_species}, intermediate species always appear alone and with stoichiometric coefficient one. For example, the substrate-enzyme complex in the Michaelis-Menten mechanism \eqref{eq:michaelis_menten} and the species $\is_1$, $\is_2$ in \eqref{eq:example} meet Definition \ref{def:intermediate_species}. We denote by $\SR$, $\FP$, the subsets of $\C\setminus\In$ such that
 \begin{itemize}
  \item $y_i\in\SR$ if and only if $y_i\notin\In$ and there exists $\is_\ell\in\In$, such that $y_i\rightarrow \is_\ell\in\R$
  \item $y_j\in\FP$ if and only if $y_j\notin\In$ and there exists $\is_\ell\in\In$, such that $\is_\ell\rightarrow y_j\in\R$
 \end{itemize}
 Thus, $\SR$ consists of the complexes from which intermediates are produced and $\FP$ consists of the complexes to which intermediates are degraded.
 We refer to $\SR$ and to $\FP$, respectively, as the \emph{initial reactants} and the \emph{final products}. In general, the two sets can have non-empty intersection, as is the case for the Michaelis-Menten mechanism \eqref{eq:michaelis_menten}.

 For convenience, we index the sets $\Sp$  and $\C$, such that $S_\ell=y_\ell=\is_\ell$ for any intermediate $\is_\ell\in\In$. Further, we introduce the index sets $\sr$, $\ii$,  and $\fp$ of $\SR$, $\In$,  and $\FP$, respectively, such that
 $$\SR=\prg{y_i}_{i\in\sr},\quad\In=\prg{\is_\ell}_{\ell\in\ii}, \quad\FP=\prg{y_j}_{j\in\fp}.$$

  \section{Effects of the intermediate species}
 \label{sec:int_effect}
 
 The presence of intermediate species slows down any reaction path that proceeds through the formation of intermediates. Intuitively, the production and degradation of a sequence of intermediate species delay the synthesis of the final product, while in a model without intermediates this synthesis would happen without any delay.

 Let $(\Sp, \C, \R,\K)$ be a reaction system with a set of intermediate species $\In\subseteq\Sp$. To investigate the effects of the presence of intermediate species in detail, we make the following assumption:

  \begin{assumption}[Reaction rates and intermediates]\label{ass:rate_deterministic}
   The consumption of the intermediates is governed by mass-action kinetics, namely, for any $\ell,\ell^\prime\in\ii$ and $j\in\fp$,
   $$\lambda_{\ell j}(x)=\kappa_{\ell j}x_\ell\quad\text{and}\quad\lambda_{\ell\ell^\prime}(x)=\kappa_{\ell \ell^\prime}x_\ell,$$
   for some  constants $\kappa_{\ell j}$, $\kappa_{\ell \ell^\prime}>0$. Furthermore, we assume that all other reaction rates do not depend on $\is_\ell$ in $\In$.
  \end{assumption}
  
 Let $\pi$ be the projection onto the non-intermediate species, and $\rho$ the projection onto the intermediate species. To ease the notation, let $\xni=\pi(x)$ and $\xin=\rho(x)$, such that $x=(\xni,\xin)$ for $x\in\RR^{|\Sp|}$.
  Under Assumption \ref{ass:rate_deterministic}, the rates of reactions that are not consuming intermediates depend on  $x$ only through $\xni$. Hence, with a slight abuse of notation, we write
  $$\lambda_{ij}(x)=\lambda_{ij}(\xni),\quad i\notin\ii.$$
  
  For any $i\in \sr$, consider the labelled directed graph $\mathcal{G}_i^{\xni}$ with node set $\In\cup\{\star\}$ and labelled edge set given by:
  \smallskip
  \begin{equation}\label{eq:G_i^x}
   \text{\storestyleof{itemize}
  \begin{listliketab}
   \begin{tabular}{Lll}
    \textbullet & $\is_\ell\xrightarrow[\phantom{\sum_{j\in\fp} \kappa_{\ell j}}]{\kappa_{\ell\ell^\prime}}\is_{\ell^\prime}$ & if $\kappa_{\ell\ell^\prime}\neq 0$ and $\ell\neq\ell^\prime$ \\
    \textbullet & $\is_\ell\xrightarrow{\sum_{j\in\fp} \kappa_{\ell j}}\star$ & if $\displaystyle\sum_{j\in\fp} \kappa_{\ell j}\neq 0$ \\
    \textbullet & $\hspace*{0.25cm}\star\xrightarrow[\phantom{\sum_{j\in\fp} \kappa_{\ell j}}]{\lambda_{i \ell}(\xni)}\is_\ell$ & if $\lambda_{i \ell}(\xni)\neq0$
   \end{tabular}
  \end{listliketab}}
  \end{equation}
By Definition \ref{def:intermediate_species},  there is directed path from any $\is_\ell$ to $\star$. Even though all intermediate species  are produced,  there might not be a directed path from $\star$ to an intermediate species $\is_\ell$, as $\lambda_{i\ell}(\xni)$ could be zero for some reaction $y_i\to\is_\ell$  and the particular choice of $\xni$. Hence  $\mathcal{G}_i^{\xni}$ might not be strongly connected. 
 
  If we order the nodes of the graph such that $\star$ is the last one, (the transpose of) the Laplacian matrix of the graph \eqref{eq:G_i^x} takes the form
  {\renewcommand\arraystretch{2}
   \begin{equation}\label{eq:laplacian}
    L_i^{\xni}=\left[\begin{array}{C|D}
                   -\La    & -\lambda_i(\xni) \\ \hline
                   e^\top \La  & \sum_{\ell\in\ii}\lambda_{i\ell}(\xni) \\
                  \end{array} \right],
   \end{equation}}where, for any $\ell,\ell^\prime\in\ii$,
  $$\La_{\ell\ell^\prime}=\begin{cases}
                         \kappa_{\ell^\prime\ell}&\text{if }\ell\neq\ell^\prime\\
                         -\sum_{h\in\ii\cup\fp}\kappa_{\ell h}&\text{if }\ell=\ell^\prime,
                        \end{cases}$$
and
  \begin{equation}\label{eq:lambda_i}
  \lambda_{i\cdot}(\xni)=(\lambda_{i\ell}(\xni))_{\ell\in\ii}.
  \end{equation}
Finally, we define the vector $\Lambda(\xni)$ of length $|\In|$, by 
  $$\Lambda_\ell(\xni)=\sum_{i\in\sr}\lambda_{i\ell}(\xni).$$
  With these definitions,  $\xin(t)$ is a solution to
  $$\frac{d}{dt}\xin(t)=\La \xin(t) + \Lambda(\xni(t)),$$
  which implies that 
    \begin{equation}\label{eq:solution_for_\is}
   \xin(t)=\exp\pr{\La t}\xin(0)+\int_0^t\exp\pr{\La (t-s)}\Lambda(\xni(s)) ds.
  \end{equation}
 The non-intermediate species evolve according to
 \begin{align*}\label{eq:diff_eq_for_X}
  \frac{d}{dt}\xni(t)&=\sum_{\substack{\ell\in\ii \\ j\in\fp}}y_j\kappa_{\ell j}x_\ell(t) + \sum_{\substack{i\notin\ii \\ 1\leq j\leq |\C|}}\pi(y_j-y_i)\lambda_{ij}(\xni(t)) \notag\\
  &=\sum_{j\in\fp}y_j\vkappaj\xin(t) + \sum_{\substack{i\notin\ii \\ 1\leq j\leq |\C|}}\pi(y_j-y_i)\lambda_{ij}(\xni(t)),
 \end{align*}
  where $\vkappaj=(\kappa_{\ell j})_{\ell\in V}$ are row vectors.  
  Under the assumption $\xin(0)=0$, it follows from \eqref{eq:solution_for_\is} that
 \begin{equation}\label{eq:delayed_ODE}
  \frac{d}{dt}\xni(t)=\sum_{j\in\fp}y_j\vkappaj\int_0^t\exp\pr{\La (t-s)}\Lambda(\xni(s)) ds + \sum_{\substack{i\notin\ii \\ 1\leq j\leq |\C|}}\pi(y_j-y_i)\lambda_{ij}(\xni(t)).
 \end{equation}
 The above is a system of delayed differential equations with a distributed delay, in the sense of \cite{kuang:delay}. In particular, \eqref{eq:delayed_ODE} does not depend explicitly on the abundance of the intermediate species. 
 
  \begin{remark}\label{rem:L_not_singular}
 The matrix $L$ is  invertible, which  follows from standard graph theory and Gershgorin theorems.   
  By potentially  changing the order of the intermediate species, $L$ can be transformed into a block diagonal matrix with irreducible diagonal blocks. Moreover, in each diagonal block there exists at least one column for which the diagonal entry is strictly smaller than the sum of the other entries. Such a column corresponds to an intermediate species that degrades to a final product.  Therefore, we can conclude by the first and the third Gershgorin theorem that all eigenvalues of $L$ have negative real part \cite{book:num_an}. In particular, zero cannot be an eigenvalue of $L$, which is therefore invertible.  
 \end{remark}

  \section{The multiscale setting}\label{sec:multiscale}
 
  Consider a reaction system $(\Sp,\C,\R,\K)$ with a set of intermediate species $\In\subseteq\Sp$. Our aim is to study the asymptotic behaviour of the trajectories of the system under the assumption that the consumption rates of the intermediate species are high (technically, what we will require is slightly different and expressed in Assumption \ref{ass:rate_to_zero}). Formally, we introduce a sequence of kinetics $\K^N$, indexed by $N\in\NN$, and let $x^N\!(t)$ denote the solution of the  reaction system $(\Sp,\C,\R,\K^N)$ with a given initial condition $x^N\!(0)\in\RR^{|\Sp|}_{\geq0}$. We assume the kinetics $\K^N$, $N\in\NN$, satisfy Assumption \ref{ass:rate_deterministic}.

  In the typical biological context the abundance of the non-intermediate species might  differ by orders of magnitude. In  addition the rate of their degradation might likewise differ. To accommodate this into the setting we introduce the sets
 $$\R_0=\{y_i\to y_j\in\R\,\vert\,y_i,y_j\not\in V\},\quad \R_1=\{y_i\to y_j\in\R\,\vert\,y_i\not\in V\}$$
  of all reactions not involving intermediate species and all reactions not consuming intermediate species, respectively. Clearly, $\R_0\subseteq \R_1$.
  
 We define two real vectors $\alpha\in\RR^{|\Sp\setminus\In|}$ and $\beta\in\RR^{|\R_1|}$,  which record the orders of magnitude of the non-intermediate species and of the reaction rates, respectively.   We assume that the kinetics $\K^N$, $N\in\NN$, are such that for any $y_i\to y_j\in\R_1$,
  \begin{equation}\label{eq:multiscale_beta}
   \lim_{N\to\infty}N^{-\beta_{ij}}\lambda^N_{ij}(N^\alpha\xni)=\lambda_{ij}(\xni),
  \end{equation}
   uniformly in  $\xni$ on compact sets of $\RR^{|\Sp\setminus\In|}$, for some locally Lipschitz function $\lambda_{ij}\colon \RR^{|\Sp\setminus\In|}\to \RR_{\ge 0}$, which is non-zero  for $y_i\to y_j\in\R_1\setminus\R_0$ (that is, there exists $\xni$, such that the function is non-zero). The latter is a natural requirement and emphasises that the scaling ideally should be such that the reaction rate  persists in the limit. Technically we only need this requirement for reactions in $\R_1\setminus\R_0$, see Remark \ref{rem:equivalent}.

 \begin{remark}\label{rem:mass-action}
 For mass-action kinetics, there is a natural choice of $\beta\in\RR^{|\R_1|}$. Assume 
  the rate constants in \eqref{eq:mass-action} takes the form $\kappa^N_{ij}=N^{\eta_{ij}}\kappa_{ij}$, where $\kappa_{ij}>0$. Then, from \eqref{eq:multiscale_beta},
 $$N^{-\beta_{ij}} \lambda^N_{ij}(N^\alpha\xni)=N^{-\beta_{ij}} N^{\eta_{ij}}\kappa_{ij}\prod_{k=1}^{|\Sp|}(N^{\alpha_k}\xni_k)^{y_{ik}}=N^{-\beta_{ij}+\eta_{ij}+\langle \alpha,y_i\rangle}\kappa_{ij}\prod_{k=1}^{|\Sp|}\xni_k^{y_{ik}},$$
 where $\langle ,\rangle$ denotes the scalar product. Hence, the expression converges for large $N$ to a non-zero limit if and only if $\beta_{ij}=\eta_{ij}+\langle \alpha,y_i\rangle$.
 \end{remark}
 
 \section{The reduced reaction system}\label{sec:reduced}
 
 For simplicity we define the following.
 \begin{definition}
   We say that a complex $y$ reacts to another complex $y'$ through intermediates, and write $y\rti y'$, if one of the following possibilities occurs.
   \begin{itemize}
    \item $y\in\In$ and either $y=y'$ or $y\to y'\in\R$;
    \item there exists a sequence of intermediate species $\is_{\ell_1}$, $\is_{\ell_2},\dots, \is_{\ell_n}$ such that
 $$y\to \is_{\ell_1}\to\dots\to\is_{\ell_n}\to y'.$$
   \end{itemize}
 \end{definition}
 
 Consider a reaction network $(\Sp,\C,\R)$ with a set of intermediate species $\In\subseteq\Sp$. We define the reduced reaction network  as in \cite{feliu:intermediates} and \cite{cappelletti:intermediates}, that is,  as
 \begin{equation}\label{eq:reduced_reaction_network}
  (\Sp\setminus\In,\C\setminus\In,\Rr),
 \end{equation}
 where $\Rr$ consists of  the reactions $y_i\rightarrow y_j$, such that either $y_i\rightarrow y_j$ is an element of $\R$ not involving any intermediate, or the complex $y_j$ reacts to $y_i$ through intermediates. Formally, 
$$\Rr=\R_0\,\cup\,\R_1^r,$$
where
$$\R_1^r=\{y_i\rightarrow y_j\mid  y_i\rti y_j, \, y_i\in\SR,y_j\in\FP\}.$$
Note that $\R_0$ and $\R_1^r$ may have non-empty intersection.
 For  the reaction networks \eqref{eq:michaelis_menten} and \eqref{eq:example}, we have $\Rr=\{E+S\to E+P\}$.
   
If the original reaction network $\R$ is equipped with the kinetics $\K^N$, we introduce a kinetics $\widetilde{\K}^N$ for the reduced reaction network\eqref{eq:reduced_reaction_network}, induced by $\R$ and $\K^N$ \cite{feliu:intermediates}.
To define $\widetilde{\K}^N$, we  need some further terminology. 
 
 Let the labelled directed graph $\mathcal{G}_i^{\xni,N}$ be as in \eqref{eq:G_i^x}, where we make the dependence on $N$ explicit. Let $\mathcal{T}_i^{\xni,N}(\cdot)$ be the set of labelled spanning trees of $\mathcal{G}_i^{\xni,N}$ rooted at the argument, and let $\sigma^N\!(\cdot)$ be the product of the edge labels of the tree in the argument. To be precise, we say that a tree is rooted at a node if all the directed edges are directed towards the root. Define 
 \begin{equation}\label{eq:definition_of_mu}
  \mu^N_{i\ell}(\xni)=\frac{\sum_{\tau\in\mathcal{T}_i^{\xni,N}\!(\is_\ell)}\sigma^N\!(\tau)}{\sum_{\tau\in\mathcal{T}_i^{\xni,N}\!\pr{\star}}\sigma^N\!(\tau)}.
 \end{equation}
The denominator is always strictly positive, since there is at least one spanning tree rooted at $\star$ and all labels are positive.  Furthermore, $\sigma^N\!(\tau)$ of such a spanning tree is independent of  $\xni\in\RR^{|\Sp\setminus\In|}$, see \eqref{eq:G_i^x}. In contrast, the numerator might be zero if there is not a spanning tree rooted at $H_\ell$. This might be the case if $y_i\to\is_\ell\notin\R$ or $\lambda^N_{i \ell}(\xni)=0$.
 
 The kinetics $\widetilde{\K}^N$ of the reduced reaction system is defined by the rate functions
 \begin{equation}\label{eq:reduced_rates_deterministic}
  \lambdarN_{ij}(\xni)=\lambda^N_{ij}(\xni)+\sum_{\ell\in\ii} \kappa^N_{\ell j}\mu^N_{i\ell}(\xni)\quad\text{for}\quad y_i\to y_j\in\Rr.
 \end{equation}  
 Similarly to the original system, the reduced reaction system is a multiscale system, and 
we assume there is a  vector $\betar\in\RR^{|\Rr|}$, such that for any compact set $\Gamma\in\RR^{|\Sp\setminus\In|}$ and any $y_i\to y_j\in\Rr$
\begin{equation}\label{eq:multiscale_beta_reduced}
 \lim_{N\to\infty}\sup_{\xni\in\Gamma}N^{-\betar_{ij}}\lambdarN_{ij}(N^\alpha\xni)<\infty
\end{equation}
Due to \eqref{eq:multiscale_beta} and \eqref{eq:reduced_rates_deterministic}, there is always $\betar_{ij}$, such that \eqref{eq:multiscale_beta_reduced} holds. This also follows straightforwardly from \eqref{eq:sum} below.  In particular, If $y_i\to y_j\in\R_0\setminus \R^r_1$, then it also follows that $\betar_{ij}$ can be chosen such that $\betar_{ij}\ge \beta_{ij}$.

 We restrict the analysis to the case in which  the species abundances balance the rate by which they change in the sense of the following assumption.  Essentially, it restricts how large we can choose $\beta^r_{ij}$ in \eqref{eq:multiscale_beta_reduced}.

  \begin{assumption}\label{ass:single_scale}
  For any reaction $y_i\to y_j$ in $\R^r$, we have
   $$\lim_{N\to\infty} N^{\betar_{ij}}\|N^{-\alpha}(y_j-y_i)\|<\infty,$$
   and $\betar_{ij}\ge \beta_{ij}$ for $y_i\to y_j\in \R_0$.
  \end{assumption}

In particular, the assumption implies that $N^{\beta_{ij}}\|N^{-\alpha}(y_j-y_i)\|$ has finite limit for all $y_i\to y_j\in \R_0$. We introduce  two examples that will serve as running examples.
\begin{example}[part 1]\label{ex:michaelis_menten}
 Consider the Michaelis-Menten mechanism, taken with mass-action kinetics:
 \begin{equation*}
   \begin{tikzpicture}[baseline={(current bounding box.center)}]
    %states
    \node[state] (E+S) at (0,0) {$E+S$};
    \node[state] (ES) at (3,0) {$ES$};
    \node[state] (E+P) at (5,0) {$E+P$};
    %edges
    \path[->] 
     (E+S) edge[bend left] node[pos=.45]{$\kappa_1$} (ES)
     (ES) edge[bend left] node[pos=.55]{$\kappa_2N^{\eta_2}$} (E+S)
           edge            node{$\kappa_3N^{\eta_3}$} (E+P);
   \end{tikzpicture}
  \end{equation*}
We take $\In=\{ES\}$ to be the set of intermediates. Then $\SR=\{E+S\}$ and $\FP=\{E+S, E+P\}$.  The graph \eqref{eq:G_i^x} is 
 \begin{equation*}
   \begin{tikzpicture}[baseline={(current bounding box.center)}]
    %states
    \node[state] (star) at (0,0) {$\star$};
    \node[state] (ES) at (4,0) {$ES$};
%     \node[state] (E+P) at (3,0) {$E+P$};
    %edges
    \path[->] 
     (star) edge[bend left] node{$\kappa_1x_Ex_S$} (ES)
     (ES) edge[bend left] node{$\kappa_2N^{\eta_2}+\kappa_3N^{\eta_3}$} (star);
   \end{tikzpicture}
  \end{equation*}
  Therefore,
  $$\mu^N_{E+S, ES}(x)=\frac{\kappa_1x_Ex_S}{\kappa_2N^{\eta_2}+\kappa_3N^{\eta_3}},$$
and it follows that the reduced reaction system is given by the following mass-action system (see \eqref{eq:reduced_rates_deterministic}),
 \begin{equation*}
   \begin{tikzpicture}[baseline={(current bounding box.center)}]
    %states
    \node[state] (E+S) at (0,0) {$E+S$};
%     \node[state] (ES) at (3,0) {$ES$};
    \node[state] (E+P) at (3,0) {$E+P$};
    %edges
    \path[->] 
     (E+S) edge node{$\frac{\kappa_1\kappa_3N^{\eta_3}}{\kappa_2N^{\eta_2}+\kappa_3N^{\eta_3}}$} (E+P);
   \end{tikzpicture}
  \end{equation*}
  Finally, Assumption \ref{ass:single_scale} is satisfied with $\alpha_E=0$, $\alpha_S<\max\{\eta_2,\eta_3\}$ and $\alpha_P=\min\{\alpha_S,\alpha_S+\eta_3-\eta_2\}$. The order of magnitude of the reaction rates are $\beta_{E+S\to ES}=\alpha_S$ and $\betar_{E+S\to E+P}=\alpha_S+\eta_3-\max\{\eta_2,\eta_3\}$, see Remark \ref{rem:mass-action}.
\end{example}

\begin{example}[part 1]%\label{ex:example}
 Consider the system \eqref{eq:example} with set of intermediates $\In=\{\is_1,\is_2\}$. As in the previous example, there is only one initial reactant, $\SR=\{E+S\}$, and one final product, $\FP=\{E+P\}$. The graph \eqref{eq:G_i^x} is given by
 \begin{equation*}
   \begin{tikzpicture}[baseline={(current bounding box.center)}]
    %states
    \node[state] (star) at (0,0) {$\star$};
    \node[state] (is_1) at (3,0) {$\is_1$};
    \node[state] (is_2) at (6,0) {$\is_2$};
%     \node[state] (E+P) at (3,0) {$E+P$};
    %edges
    \path[->] 
     (star) edge[bend left] node{$\kappa_1x_Ex_S$} (is_1)
     (is_1) edge[bend left] node{$N^2\kappa_4$} (star)
     (is_1) edge[bend left] node{$N^3\kappa_2$} (is_2)
     (is_2) edge[bend left] node{$N^4\kappa_3$} (is_1);
   \end{tikzpicture}
  \end{equation*}
 In this case,
 $$\mu^N_{E+S,\is_1}(x)=\frac{N^4\kappa_1\kappa_3x_Ex_S}{N^6\kappa_3\kappa_4}.$$
 Since the reaction constant of the reaction $H_1\to E+P$ is $N^2\kappa_4$, the reduced reaction system is given by the mass-action system (see \eqref{eq:reduced_rates_deterministic})
 \begin{equation*}
   \begin{tikzpicture}[baseline={(current bounding box.center)}]
    %states
    \node[state] (E+S) at (0,0) {$E+S$};
%     \node[state] (ES) at (3,0) {$ES$};
    \node[state] (E+P) at (3,0) {$E+P$};
    %edges
    \path[->] 
     (E+S) edge node{$\kappa_1$} (E+P);
   \end{tikzpicture}
  \end{equation*}
 Note that the kinetics of the reduced reaction system does not depend on $N$. Finally, Assumption \ref{ass:single_scale} 
 is satisfied if, for example, $\alpha_E=0$, $\alpha_S\leq\alpha_P$. In this case the orders  of the reaction rates are given by $\beta_{E+S\to \is_1}=\betar_{E+S\to E+P}=\alpha_S$, see Remark \ref{rem:mass-action}.
 \end{example}

\section{Equivalent description of the reduced reaction system}\label{sec:equivalent}

The definition of the reduced reaction system is that of   
\cite{feliu:intermediates}.  It is also used in \cite{cappelletti:intermediates}, where   a  probabilistic interpretation of the kinetics $\K^N$ is given in the context of stochastic reaction systems. Specifically, it is shown that
\begin{equation*}
\kappa^N_{\ell j}\mu^N_{i\ell}(x)=\lambda^N_{i\ell}(x)\pi^N_{\ell j},
\end{equation*}
where $\pi^N_{\ell j}$ is the probability that $y_j$ is the final product eventually created by a single molecule of the intermediate species $\is_\ell$. In our context, $\pi^N_{\ell j}$ might be interpreted as the fraction of the concentration of $\is_\ell$,   that  is converted into $y_j$.

 In particular the relation implies that
\begin{equation}\label{eq:sum}
 \sum_{j\in\fp}\kappa^N_{\ell j}\mu^N_{i\ell}(x)= \lambda^N_{i\ell}(x). 
\end{equation}

The next lemma  concerns the inverse of the matrix $\La^N$, which exists by Remark \ref{rem:L_not_singular}. A similar result appears in \cite{feliu:intermediates}, and we give the proof in Section \ref{sec:proofs} for completeness.

 \begin{lemma}\label{lem:mu_and_inverse}
  Let $\mu^N_{i\ell}(x)$ be defined as in \eqref{eq:definition_of_mu}. We have that
  $$-\pr{\pr{\La^N}^{\!-1}\lambda_{i\cdot}^N\!(\xni)}_{\ell}=\mu^N_{i\ell}\!\pr{\xni},$$
  where $\ell$ on the left side indicates the $\ell$th entry  and $\lambda_{i\cdot}^N(\xni)$ is as in \eqref{eq:lambda_i}.
\end{lemma}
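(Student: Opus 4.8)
The plan is to reduce the identity to a single linear-algebraic fact and then read that fact off from the Matrix--Tree theorem. Since $\La^N$ is invertible by Remark~\ref{rem:L_not_singular}, it is enough to show that the vector $\mu^N_{i\cdot}(\xni):=(\mu^N_{i\ell}(\xni))_{\ell\in\ii}$ solves the linear system $\La^N\mu^N_{i\cdot}(\xni)=-\lambda^N_{i\cdot}(\xni)$; multiplying on the left by $(\La^N)^{-1}$ then yields exactly the asserted formula, component by component. So the whole content of the lemma is the relation $\La^N\mu^N_{i\cdot}(\xni)=-\lambda^N_{i\cdot}(\xni)$.

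To establish it, I would work with the weighted digraph $\mathcal{G}_i^{\xni,N}$ on node set $\In\cup\{\star\}$, and recall that the matrix $L_i^{\xni}$ appearing in \eqref{eq:laplacian} (with the $N$-dependent data, i.e.\ $\La=\La^N$, $\lambda_{i\cdot}=\lambda^N_{i\cdot}$) is, by construction, the transpose of the weighted Laplacian $M$ of that graph. For a node $u$ put $v_u:=\sum_{\tau\in\mathcal{T}_i^{\xni,N}(u)}\sigma^N(\tau)$, the generating polynomial of the spanning trees rooted at $u$; by the Matrix--Tree theorem $v_u$ equals the principal minor of $L_i^{\xni}$ obtained by deleting the row and column indexed by $u$, so that $\mu^N_{i\ell}(\xni)=v_{\is_\ell}/v_\star$ by \eqref{eq:definition_of_mu}. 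The key intermediate step is to show that $v=(v_u)_{u\in\In\cup\{\star\}}$ lies in the kernel of $L_i^{\xni}$. Here $M$ has vanishing row sums, so it annihilates the all-ones vector $e$ and is singular; its $\In\times\In$ block is $-(\La^N)^{\top}$, which is invertible because $\La^N$ is (Remark~\ref{rem:L_not_singular}), so $M$ has a nonzero $|\In|\times|\In|$ minor, hence rank $|\In|$ and $\ker M=\mathrm{span}(e)$. Therefore every column of $\mathrm{adj}(M)$ is a multiple of $e$, and comparing the diagonal entries (which by the Matrix--Tree theorem are exactly the $v_u$) gives $\mathrm{adj}(M)=e\,v^{\top}$; since $\mathrm{adj}(M)\,M=\det(M)\,I=0$, we conclude $v^{\top}M=0$, that is $L_i^{\xni}v=M^{\top}v=0$.

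Finally I would read off the rows of $L_i^{\xni}v=0$ indexed by $\In$: by the block form \eqref{eq:laplacian} these rows are $[\,-\La^N\ \mid\ -\lambda^N_{i\cdot}(\xni)\,]$, so the equation reads $\La^N(v_{\is_\ell})_{\ell\in\ii}=-v_\star\,\lambda^N_{i\cdot}(\xni)$. Now $v_\star$ is a sum of products of positive edge weights and is nonzero, since it equals $(-1)^{|\In|}\det\La^N$; hence $v_\star>0$, and dividing by it while using $v_{\is_\ell}/v_\star=\mu^N_{i\ell}(\xni)$ gives $\La^N\mu^N_{i\cdot}(\xni)=-\lambda^N_{i\cdot}(\xni)$, the relation required above. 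I expect the only delicate point to be the bookkeeping around the Matrix--Tree theorem --- matching its orientation convention for spanning trees (all edges pointing towards the root) with the transpose appearing in \eqref{eq:laplacian}, and making sure $v_\star>0$ even though $\mathcal{G}_i^{\xni,N}$ need not be strongly connected. The latter is settled cleanly by identifying $v_\star$ with the non-vanishing minor $(-1)^{|\In|}\det\La^N$, rather than by a combinatorial argument, although one could alternatively exhibit a spanning tree rooted at $\star$ directly from Definition~\ref{def:intermediate_species}. The remaining steps are routine linear algebra.
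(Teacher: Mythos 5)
Your proof is correct, and while it rests on the same key input as the paper's --- the Matrix--Tree theorem, identifying the numerator and denominator of \eqref{eq:definition_of_mu} with principal minors of $L_i^{\xni}$ --- the linear algebra around it is organized genuinely differently. The paper stays at the level of determinants: using that the last row of $L_i^{\xni}$ is minus the sum of the others, it converts the $(\ell,\ell)$ principal minor into (minus) the determinant of $\La^N$ with its $\ell$th column replaced by $\lambda^N_{i\cdot}(\xni)$, and then invokes Cramer's rule to recognize $-\bigl((\La^N)^{-1}\lambda^N_{i\cdot}(\xni)\bigr)_\ell$. You instead establish the structural fact that the vector $v$ of rooted-spanning-tree polynomials spans the kernel of the transposed Laplacian, via $\mathrm{rank}\,M=|\In|$, $\ker M=\mathrm{span}(e)$, $\mathrm{adj}(M)=e\,v^{\top}$ and $\mathrm{adj}(M)M=0$, and then read the identity $\La^N\mu^N_{i\cdot}(\xni)=-\lambda^N_{i\cdot}(\xni)$ off the $\In$-indexed block rows of $L_i^{\xni}v=0$. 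Your route needs the extra (easy) observations that the $\In\times\In$ block of $M$ is invertible and that $v_\star=(-1)^{|\In|}\det\La^N\neq0$, both of which you correctly derive from Remark \ref{rem:L_not_singular}; in exchange it avoids the sign and column-permutation bookkeeping of the paper's Cramer's-rule step, and it exposes the underlying reason the identity holds (the tree vector is the stationary left null vector of the Laplacian). The orientation convention you flag --- trees directed towards the root, Laplacian built from out-degrees, $L_i^{\xni}$ being the transpose --- is exactly the one the paper uses, so the two computations are consistent.
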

 \begin{remark}\label{rem:equivalent} From Lemma \ref{lem:mu_and_inverse} and equations   \eqref{eq:multiscale_beta}, \eqref{eq:reduced_rates_deterministic}, and \eqref{eq:multiscale_beta_reduced}, it follows that
  $$\limsup_{N\to\infty}\,-N^{-\betar_{ij}}\sum_{\ell, \ell'\in\ii} \kappa^N_{\ell' j}\pr{\pr{\La^N}^{\!-1}e_{\ell}N^{\beta_{i\ell}}}_{\ell'}<\infty,$$
 where  $\lambda^N_{i\ell}(N^\alpha \xni)=\Theta(N^{\beta_{i\ell}})$ for some $\xni\in \RR_{\geq0}^{|\Sp\setminus\In|}$, by \eqref{eq:multiscale_beta}.
 Therefore Assumption \ref{ass:single_scale}  implies that
 $$\limsup_{N\to\infty}\,-N^{\beta_{i\ell}-\alpha}(y_j-y_i)\vkappaj^N\pr{\La^N}^{-1}\!e_\ell <\infty$$
  for any $i\in\sr$, $j\in\fp$ and $\ell\in\ii$, such that $y_i\to\is_\ell\in\R$.
 \end{remark}
 
\section{The limiting reaction system}\label{sec:limiting}

The assumptions made in the previous sections allow us to approximate the dynamics of the original system to that of the reduced reaction system for any $N$ (Proposition \ref{prop:convergence}). However, it is also of interest to study the limit as $N$ tends to infinity and to obtain a limiting reaction system that is independent of $N$. For this, we need a stronger assumption than \eqref{eq:multiscale_beta_reduced}.

\begin{assumption}\label{ass:limiting_system}
 For any reaction $y_i\to y_j\in\Rr$, there exists a locally Lipschitz function $\lambdal_{ij}\colon\RR_{\geq0}^{|\Sp\setminus\In|}\to\RR_{\geq0}$, such that
 $$\lim_{N\to\infty}N^{-\betar_{ij}}\lambdarN_{i j}(N^\alpha\xni)=\lambdal_{ij}(\xni),$$
 uniformly on compact sets of $\RR_{\geq0}^{|\Sp\setminus\In|}$.
\end{assumption}

The assumption trivially implies \eqref{eq:multiscale_beta_reduced}. Under Assumption \ref{ass:limiting_system}, we can define a limiting reaction network. We begin by introducing a new set of complexes with cardinality  possibly different from that of $\C$. For any reaction $y_i\to y_j\in\Rr$, we define the complexes
$$y_i^{(i,j)}=\lim_{N\to\infty}N^{\betar_{ij}}N^{-\alpha}y_i\quad\text{and}\quad y_j^{(i,j)}=\lim_{N\to\infty}N^{\betar_{ij}}N^{-\alpha}y_j\;,$$
and let
$$\Cl=\{y_i^{(i,j)},y_j^{(i,j)}\colon y_i\to y_j\in\Rr\}, \quad \Rl=\{y_i^{(i,j)}\to y_j^{(i,j)}\colon y_i\to y_j\in\Rr\}.$$

The limiting reaction network is  defined as $(\Sp\setminus\In,\Cl,\Rl)$. A kinetics $\Kl$ for the limiting reaction network is defined by  the functions $\lambdal_{ij}(\xni)$, introduced in Assumption \ref{ass:limiting_system}, such that $\lambdal_{ij}(\xni)$ is the rate function of the reaction $y_i^{(i,j)}\to y_j^{(i,j)}$. Note that the rate function $\lambdal_{ij}(\xni)$ may be the constantly zero (as in Example \ref{ex:mu_to_infty} below), in which case we remove the reaction $y_i^{(i,j)}\to y_j^{(i,j)}$ from the reaction network. The  reaction system $(\Sp\setminus\In,\Cl,\Rl, \Kl)$ with such reactions removed, is called the limiting reaction system. 

\setcounter{example}{0}
\begin{example}[part 2]
 Since the enzyme $E$ does not change in any reaction of the reduced reaction system, its concentration remains constant over time.  As the reduced reaction network has only one reaction, we let $\betar_{E+S\to E+P}=\betar$.  To compute the limiting reaction system, recall that $\betar=\alpha_S+\eta_3-\max\{\eta_2,\eta_3\}$.  Assuming mass-action kinetics, we have the following different cases with $z_E(0)=E_0$:
 \begin{center}
 \begin{tabular}{c|c}
  \toprule
  Condition & Limiting system\\
  \toprule
  $\eta_2>\eta_3$, $\alpha_P>\betar$ & $\emptyset$\\
  & in this case $z(t)=z(0)$\\
  \midrule
  $\eta_2>\eta_3$, $\alpha_P=\betar$ & $0\xrightarrow{\frac{\kappa_1\kappa_3z_S(0)E_0}{\kappa_2}}P$\\
  \midrule
  $\eta_2=\eta_3$, $\alpha_P>\betar$ & $S\xrightarrow{\frac{\kappa_1\kappa_3E_0}{\kappa_2+\kappa_3}}0$\\
  \midrule
  $\eta_2=\eta_3$, $\alpha_P=\betar$ & $S\xrightarrow{\frac{\kappa_1\kappa_3E_0}{\kappa_2+\kappa_3}}P$\\
  \midrule
  $\eta_2<\eta_3$, $\alpha_P>\betar$ & $S\xrightarrow{\kappa_1E_0}0$\\
  \midrule
  $\eta_2<\eta_3$, $\alpha_P=\betar$ & $S\xrightarrow{\kappa_1E_0}P$\\
  \bottomrule
 \end{tabular}
 \end{center}
\end{example}

\begin{example}[part 2] The limiting reaction system coincides with the reduced reaction system.
\end{example}

\section{Convergence results}\label{sec:theorems}
 
 Our aim is to approximate the  evolution $\xni^N(t)$ of the non-intermediate species by a solution $z^N(t)$ of the reduced reaction system, or by a solution $z(t)$ of the limiting reaction system,  when the latter is defined. Specifically, we are interested in uniform convergence of the solutions on compact time intervals, in the sense of Proposition \ref{prop:convergence} and Theorem \ref{thm:convergence} below. In order to obtain such convergence, we need to make a key hypothesis that relates to the speed of consumption of the intermediate species. For convenience, we define the following quantity, for any $j\in\fp$ and $\ell\in\ii$:
 $$a_j=\min_{k\colon y_{jk}\neq0}\alpha_k,\qquad  \beta^*_\ell=\max_{i\in\sr\colon\! y_i\to\is_\ell\in\R}\,\,\beta_{i\ell},$$
 with the convention that the minimum (maximum) over the empty set is (minus) infinity. 
 \begin{assumption}\label{ass:rate_to_zero}
  For any $\ell\in\ii$, such that $y_i\to\is_\ell$ for some $i\in\sr$, and for any $\ell'\in\ii,\quad j\in\fp$, such that $\is_\ell\rti \is_{\ell'}$ and $\is_{\ell'}\rti y_j$, we assume 
  $$\lim_{N\to\infty}
  N^{\beta^*_\ell-a_j}\, e_{\ell'}^\top \exp\pr{N^{a_j-\beta^*_\ell}\varepsilon \La^N} e_{\ell}  =0.$$
% and  
 % $$ \beta^*_\ell=\max_{i\in\sr\colon\! y_i\to\is_\ell\in\R}\,\,\beta_{i\ell},$$
%with $\max\emptyset=-\infty$.
 \end{assumption}
 
 \begin{remark}\label{rem:prob}
  Assumption \ref{ass:rate_to_zero} is very similar to what is required in \cite{cappelletti:intermediates} in order to have convergence of the evolution of a stochastic system with intermediates to that of a reduced reaction system without intermediates.   In particular, \cite{cappelletti:intermediates} offers a probabilistic interpretation of Assumption \ref{ass:rate_to_zero}. Let $\tau_\ell^N$ denote the random time until a molecule of $\is_\ell$ is  transformed into a non-intermediate complex, assuming a stochastic kinetics similar to  $\K^N$. The time $\tau_\ell^N$ then follows a phase-type distribution conditioned on a initial distribution, and Assumption \ref{ass:rate_to_zero} is then implied by
  \begin{equation}\label{eq:equivalence_stochastic}
   \lim_{N\to\infty}
  N^{\beta^*_\ell-a_j} P\pr{\tau_\ell^N>N^{a_j-\beta^*_\ell}\varepsilon}
  =0
  \end{equation}
  for any scalar $\varepsilon>0$, any $\ell\in\ii$, such that $y_i\to\is_\ell$ for some $i\in\sr$, and any $j\in\fp$, such that $\is_\ell\rti y_j$. Here $P$ denotes the probability of the event. Such an implication might be useful, as in some cases \eqref{eq:equivalence_stochastic} is easier to check than Assumption \ref{ass:rate_to_zero}.
 \end{remark}
 
  We give here some particular cases under which Assumption \ref{ass:rate_to_zero} holds. These  cases arise frequently in biological applications, so an explicit treatment  may be useful. Furthermore, if the concentrations of the non-intermediate species are of the same order of magnitude, as well as the propensities of the reactions transforming them, then we can simply consider $\alpha_k=0$ and $\beta_{ij}=0$ (for the relevant indices). In this particular case, it follows that $a_j=0$ for any $j\in\fp$ and $\beta^*_\ell=0$ for any $\ell\in\ii$ with $y_i\to\is_{\ell}$ for some $i\in\sr$.
 
 We first consider the particular case in which all the intermediate species have the same order of degradation.
 \begin{proposition}\label{prop:same_N}
  Assume that there exists $\gamma\in\RR$ such that for any $N\geq1$, and any $\ell\in\ii$ and $h\in\ii\cup\fp$, we have $\kappa^N_{\ell h}=N^\gamma\kappa_{\ell h}$ for some non-negative constants $\kappa_{\ell h}$. Moreover, assume that $\gamma>\beta^*_\ell-a_j$ for any $\ell\in\ii$, such that $y_i\to\is_\ell$ for some $i\in\sr$, and any   $j\in\fp$, such that $\is_\ell\rti y_j$. Then Assumption \ref{ass:rate_to_zero} holds.
 \end{proposition}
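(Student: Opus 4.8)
The plan is to exploit the uniform scaling $\kappa^N_{\ell h}=N^\gamma\kappa_{\ell h}$ to factor $\La^N$, reducing Assumption \ref{ass:rate_to_zero} to an elementary decay estimate for a single, $N$-independent matrix exponential. First I would observe that, from the definition of $\La$ given just below \eqref{eq:laplacian} (taken now with the rate constants $\kappa^N_{\ell h}$ of $\K^N$), the scaling hypothesis yields $\La^N=N^\gamma\La$, where $\La=\La^1$ is the matrix built from the constants $\kappa_{\ell h}$. By Remark \ref{rem:L_not_singular}, every eigenvalue of $\La^N$, and hence of $\La$, has strictly negative real part, so $\La$ is Hurwitz. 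Consequently, fixing any $c$ with $0<c<\min_{\lambda}\abs{\operatorname{Re}\lambda}$ (minimum over the eigenvalues $\lambda$ of $\La$), there is a constant $C>0$, depending only on $\La$ and $c$, such that
\[
\|\exp(t\La)\|\le C\,e^{-ct}\qquad\text{for all }t\ge0,
\]
where $\|\cdot\|$ denotes the operator norm; the polynomial factors arising from possible non-trivial Jordan blocks of $\La$ are absorbed into $C$ upon shrinking $c$.

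Next I would substitute this into the quantity appearing in Assumption \ref{ass:rate_to_zero}. Fix $\ell\in\ii$ with $y_i\to\is_\ell\in\R$ for some $i\in\sr$, together with $\ell'\in\ii$ and $j\in\fp$ such that $\is_\ell\rti\is_{\ell'}$ and $\is_{\ell'}\rti y_j$; note this entails $\is_\ell\rti y_j$, so the hypothesis $\gamma>\beta^*_\ell-a_j$ of the proposition applies. Setting $\delta:=\gamma-(\beta^*_\ell-a_j)>0$, we have $N^{a_j-\beta^*_\ell}\varepsilon\La^N=N^{\delta}\varepsilon\La$, whence, using $\abs{e_{\ell'}^\top M e_\ell}\le\|M\|$ for any matrix $M$,
\[
N^{\beta^*_\ell-a_j}\,\abs{e_{\ell'}^\top\exp\pr{N^{a_j-\beta^*_\ell}\varepsilon\La^N}e_\ell}\;\le\;C\,N^{\beta^*_\ell-a_j}\,e^{-c\varepsilon N^{\delta}}.
\]
Since $\beta^*_\ell$ and $a_j$ are finite (the former because there is some $i\in\sr$ with $y_i\to\is_\ell\in\R$, the latter because $j\in\fp$), the exponent $\beta^*_\ell-a_j$ is a fixed real number; as $\delta>0$, the right-hand side tends to $0$ as $N\to\infty$. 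This is exactly the limit asserted in Assumption \ref{ass:rate_to_zero}. Since only finitely many triples $(\ell,\ell',j)$ occur, the constants $C$ and $c$ may be chosen uniformly over them, and the argument is complete.

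I do not expect a genuine obstacle here. The only two points requiring any care are the factorization $\La^N=N^\gamma\La$, which is immediate from Assumption \ref{ass:rate_deterministic} and the explicit form of $\La$ below \eqref{eq:laplacian}, and the Hurwitz decay bound for $\exp(t\La)$, which is a standard consequence of Remark \ref{rem:L_not_singular}; if one prefers to avoid Jordan forms, it suffices to pick $c<\min_\lambda\abs{\operatorname{Re}\lambda}$ and observe that $t\mapsto e^{ct}\|\exp(t\La)\|$ is then bounded on $[0,\infty)$.
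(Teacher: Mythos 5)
Your proof is correct and follows essentially the same route as the paper's: factor out the scaling to get $\La^N=N^\gamma\La$, invoke Remark \ref{rem:L_not_singular} to see that $\La$ is Hurwitz, bound $\|\exp(t\La)\|\le Ce^{-ct}$, and conclude since $\gamma>\beta^*_\ell-a_j$ makes the exponential decay beat the polynomial prefactor. The additional observation that $\is_\ell\rti\is_{\ell'}$ and $\is_{\ell'}\rti y_j$ imply $\is_\ell\rti y_j$ (so the hypothesis on $\gamma$ indeed applies to each relevant triple) is a small point of care the paper leaves implicit, but there is no substantive difference.
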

 \begin{proof}
 If $\kappa^N_{\ell h}=N^\gamma\kappa_{\ell h} $ for $\ell\in\ii$ and $h\in\ii\cup\fp$, then necessarily
 $$\La^N=N^{\gamma}\La,$$
 where for simplicity we let $\La=\La^1$. Following the argument in Remark \ref{rem:L_not_singular}, we conclude that the eigenvalues of $\La$ have negative real part. Then, it follows that there exist two positive constants $\Gamma_0$ and $\Gamma_1$ such that for any $t>0$
 $$\|\exp(tL)\|\leq \Gamma_0 e^{-\Gamma_1 t}.$$
 Hence, for any $\ell, \ell'\in\ii$
 \begin{align*}
  |N^{\beta^*_\ell-a_j}e_{\ell'}^\top \exp\pr{N^{a_j-\beta^*_\ell}\varepsilon \La^N} e_{\ell}|&=N^{\beta^*_\ell-a_j}|e_{\ell'}^\top \exp\pr{N^{a_j-\beta^*_\ell+\gamma}\varepsilon \La} e_{\ell}|\\
  &\leq N^{\beta^*_\ell-a_j} \Gamma_2 e^{-\Gamma_1 N^{a_j-\beta^*_\ell+\gamma}\varepsilon},
 \end{align*}
 for some positive constant $\Gamma_2$. This concludes the proof because  $\gamma>\beta^*_\ell-a_j$ by assumption.
 \end{proof}
 
 The second  case we consider deals with the absence of cycles in the intermediate structures, in the sense specified in the following proposition.
 \begin{proposition}\label{prop:tree}
  Assume there does not exist a directed path of the form $\is_{\ell_1}\to\dots\to\is_{\ell_n}\to \is_{\ell_1}$, for any sequence of intermediate species $\is_{\ell_1}$, $\is_{\ell_2},\dots, \is_{\ell_n}$. Moreover, assume that
  \begin{equation}\label{eq:tree}
   \lim_{N\to\infty}N^{a_j-\beta^*_{\ell}}\sum_{h\in\ii\cup\fp}\kappa^N_{\ell' h}=0
  \end{equation}
  for any $\ell\in\ii$ such that $y_i\to\is_\ell$ for some $i\in\sr$, and for any $\ell'\in\ii$, $j\in\fp$ such that $\is_{\ell}\rti \is_{\ell'}$ and $\is_{\ell'}\rti y_j$. Then Assumption  \ref{ass:rate_to_zero} holds.
 \end{proposition}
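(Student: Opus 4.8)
The plan is to use the absence of cycles to put $\La^N$ in triangular form and then estimate the relevant entries of $\exp(t\La^N)$ via the Markov chain interpretation of Remark~\ref{rem:prob}. The spectral argument of Proposition~\ref{prop:same_N} cannot simply be repeated, because for a triangular $\La^N$ with non-uniformly scaled entries the prefactor $\Gamma_0$ in a bound $\|\exp(t\La^N)\|\le\Gamma_0e^{-\Gamma_1 t}$ need not stay bounded in $N$; one must follow individual entries.

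Since there is no directed cycle $\is_{\ell_1}\to\dots\to\is_{\ell_n}\to\is_{\ell_1}$ among the intermediates, I would reindex $\In$ along a topological order of the directed graph on $\In$ with an edge $\is_\ell\to\is_{\ell'}$ exactly when $\kappa^N_{\ell\ell'}\neq0$. Then $(\La^N)_{\ell\ell'}=\kappa^N_{\ell'\ell}$ vanishes whenever $\ell'>\ell$, so $\La^N$ is lower triangular; hence $\exp(t\La^N)$ is lower triangular with diagonal entries $\exp(-t\,r^N_\ell)$, $r^N_\ell:=\sum_{h\in\ii\cup\fp}\kappa^N_{\ell h}$, and — iterating Duhamel's formula, equivalently solving $\dot v=\La^N v$, $v(0)=e_\ell$, one coordinate at a time — $e_{\ell'}^\top\exp(t\La^N)e_\ell$ is a \emph{finite} sum over directed paths $\is_\ell=\is_{p_0}\to\dots\to\is_{p_m}=\is_{\ell'}$ (acyclicity bounds $m$ by $|\In|-1$). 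Probabilistically this entry is $P_\ell(X^N_t=\is_{\ell'})$, where $X^N$ is the intermediate-valued Markov chain of Remark~\ref{rem:prob} (generator $(\La^N)^\top$, killed when a non-intermediate complex is first formed), which on an acyclic network passes through $\is_{\ell'}$ at most once.

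I would next fix a triple $\ell,\ell',j$ as in Assumption~\ref{ass:rate_to_zero}, put $t=N^{a_j-\beta^*_\ell}\varepsilon$, and let $\sigma^N_{\ell'}$ be the first time $X^N$ reaches $\is_{\ell'}$. Decomposing on $\sigma^N_{\ell'}$ and using the memoryless sojourn at $\is_{\ell'}$,
\[
e_{\ell'}^\top\exp(t\La^N)e_\ell\ =\ E_\ell\big[\mathbbm{1}\{\sigma^N_{\ell'}\le t\}\,e^{-r^N_{\ell'}(t-\sigma^N_{\ell'})}\big]\ \le\ e^{-r^N_{\ell'}t/2}+P_\ell\big(\sigma^N_{\ell'}>t/2\big).
\]
By acyclicity any route from $\is_\ell$ to $\is_{\ell'}$ is a directed path of length $\le|\In|-1$, each of whose vertices $\is_p$ satisfies $\is_\ell\rti\is_p$ and $\is_p\rti\is_{\ell'}\rti y_j$; so $\sigma^N_{\ell'}$ is a sum of at most $|\In|$ exponential sojourn times with rates among the $r^N_p$, and a union bound gives $P_\ell(\sigma^N_{\ell'}>t/2)\le\sum_p e^{-r^N_p t/(2|\In|)}$, the sum over the finitely many intermediates $\is_p$ with $\is_\ell\rti\is_p$ and $\is_p\rti y_j$. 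Hence
\[
N^{\beta^*_\ell-a_j}\,e_{\ell'}^\top\exp(t\La^N)e_\ell\ \le\ (|\In|+1)\,N^{\beta^*_\ell-a_j}\exp\!\Big(-\tfrac{\varepsilon}{2|\In|}\,N^{a_j-\beta^*_\ell}\min_p r^N_p\Big).
\]

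Every $\is_p$ in that minimum is covered by \eqref{eq:tree}, which is exactly what forces $N^{a_j-\beta^*_\ell}\min_p r^N_p$ to diverge; since in the scaling regime under consideration the rate constants are of polynomial order in $N$ (compare Remark~\ref{rem:mass-action}), the divergence is at least polynomial, so the exponential overwhelms the polynomial prefactor $N^{\beta^*_\ell-a_j}$ and the right-hand side tends to $0$ — which is Assumption~\ref{ass:rate_to_zero}. I expect the main difficulty to be this last quantitative comparison — certifying that \eqref{eq:tree} drives the exponent to infinity fast enough to absorb $N^{\beta^*_\ell-a_j}$ — together with the combinatorial bookkeeping of the previous step, isolating the finite set of intermediates that can lie on a directed path from $\is_\ell$ toward $y_j$ so that \eqref{eq:tree} may be invoked for each; it is the acyclicity hypothesis that makes that set finite and reduces the hitting-time estimate to the elementary union bound above.
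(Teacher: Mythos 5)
Your overall strategy coincides with the paper's: use acyclicity to reindex $\In$ so that $\La^N$ is lower triangular, and then control the single entry $e_{\ell'}^\top\exp(t\La^N)e_\ell$ rather than a norm of the whole exponential. Where the paper writes $\La^N=D^N+T^N$ and factors $\exp(t\La^N)=\exp(tD^N)\exp(tT^N)$, treating the entries of $\exp(tT^N)$ as polynomials in $t\kappa^N_{\ell\ell'}$, you substitute the hitting-time decomposition $e_{\ell'}^\top\exp(t\La^N)e_\ell=E_\ell\big[\mathbf{1}\{\sigma^N_{\ell'}\le t\}e^{-r^N_{\ell'}(t-\sigma^N_{\ell'})}\big]$ plus a union bound; that identity and the "visited at most once" observation are correct, and this route even sidesteps the fact that $D^N$ and $T^N$ need not commute. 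However, your union bound $P_\ell(\sigma^N_{\ell'}>t/2)\le\sum_p e^{-r^N_p t/(2|\In|)}$ is false whenever $P_\ell(\sigma^N_{\ell'}=\infty)>0$, which happens as soon as the chain can be absorbed into a final product, or branch away, before ever reaching $\is_{\ell'}$ (e.g.\ $\is_1\to P_1$ in Example \ref{ex:mu_to_infty} makes the left side close to $1$). You must keep the restriction to $\{\sigma^N_{\ell'}\le t\}$ from the preceding display, bound $P_\ell(t/2<\sigma^N_{\ell'}\le t)$, and apply the union bound conditionally on the realized path; this is a repairable slip.

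The genuine gap is the final step. Your estimate reduces everything to $N^{\beta^*_\ell-a_j}\exp\big(-c\,N^{a_j-\beta^*_\ell}\min_p r^N_p\big)\to0$, which requires $N^{a_j-\beta^*_\ell}r^N_p\to\infty$ (indeed faster than $\log N$ when $\beta^*_\ell>a_j$). But hypothesis \eqref{eq:tree} as stated asserts that $N^{a_j-\beta^*_\ell}\sum_{h\in\ii\cup\fp}\kappa^N_{\ell'h}$ tends to \emph{zero}; you assert it is "exactly what forces" divergence, which is the opposite of what it says. As written, your chain of inequalities therefore does not deliver the conclusion from the stated hypothesis, and you needed to confront this head-on: either \eqref{eq:tree} is to be read with the limit $\infty$ (in which case the statement you are proving is not the one on the page and you should say so), or, if read literally, your bound gives $N^{\beta^*_\ell-a_j}e^{-o(1)}$, which diverges when $\beta^*_\ell>a_j$. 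Moreover, even under the divergence reading, your closing claim that "the divergence is at least polynomial" because "the rate constants are of polynomial order in $N$" is not available: the proposition assumes a general kinetics $\K^N$, not mass-action rates of the form $N^{\gamma}\kappa$, so nothing rules out $N^{a_j-\beta^*_\ell}r^N_p$ diverging slower than $\log N$. This quantitative comparison, which you correctly identify as the main difficulty, is left unproved.
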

 \begin{proof}
  Note that by \eqref{eq:tree} the growth of the constants $\kappa^N_{\ell h}$ is at most polynomial in $N$. 
 
  Potentially by reordering the intermediate species, the matrix $\La^N$ is lower triangular. Indeed, by assumption it is not possible that $\is_{\ell}\rti \is_{\ell'}$ and $\is_{\ell'}\rti \is_{\ell}$ for two intermediate species $\is_{\ell}\neq\is_{\ell'}$. It follows that for any $t>0$ the matrix $\exp(t\La^N)$ is lower triangular. Specifically, $\La^N$ can be written as $\La^N=D^N+T^N$, where $D^N$ is a diagonal matrix and $T^N$ is a lower triangular matrix with zero diagonal entries. Therefore,
  $$\exp(t\La^N)=\exp(tD^N)\exp(tT^N).$$
  The matrix $\exp(tD^N)$ is a diagonal matrix with $\ell$th diagonal entry equal to
  $$e^{t\La^N_{\ell \ell}}=e^{-t\sum_{h\in\ii\cup\fp}\kappa^N_{\ell h}}.$$
  Moreover, since $T^N$ is nilpotent, $\exp(tT^N)$ is a lower triangular matrix whose entries are polynomials in $t\kappa_{\ell \ell'}^N$, for $\ell,\ell'\in\ii$ with $\ell\neq\ell'$. %In particular, by standard theory of Laplacian matrices of a graph, we have that the entry $\exp(tT^N)_{\ell' \ell}\neq 0$ if and only if $\is_{\ell}\rti \is_{\ell'}$ (in particular, if $\ell=\ell'$ the entry is 1).
  Hence, for any $\varepsilon>0$, any $\ell\in\ii$ such that $y_i\to\is_\ell$ for some $i\in\sr$, and for any $\ell'\in\ii$, $j\in\fp$ such that $\is_{\ell}\rti \is_{\ell'}$ and $\is_{\ell'}\rti y_j$, we have 
  $$N^{\beta^*_\ell-a_j}\, e_{\ell'}^\top \exp\!\pr{N^{a_j-\beta^*_\ell}\varepsilon \La^N} e_{\ell}=N^{\beta^*_\ell-a_j} e^{-N^{a_j-\beta^*_\ell}\varepsilon\sum_{h\in\ii\cup\fp}\kappa^N_{\ell' h}}\exp(N^{a_j-\beta^*_\ell}\varepsilon T^N)_{\ell' \ell}.$$
  The proof is therefore concluded by \eqref{eq:tree} and by the fact that the entries of $\exp(N^{a_j-\beta^*_\ell}\varepsilon T^N)$ are polynomial functions in $N^{a_j-\beta^*_\ell}\varepsilon\kappa_{\ell' \ell''}^N$, which grow at most polynomially in $N$.
 \end{proof}
 
 In addition to the particular cases considered in Propositions \ref{prop:same_N} and \ref{prop:tree}, a simpler sufficient condition implying Assumption \ref{ass:rate_to_zero} is given in the next proposition. A similar result appears in \cite{cappelletti:intermediates}.
  
 \begin{proposition}\label{prop:mu_suff_ass}
  If
  \begin{equation}\label{eq:mu_tend_to_zero}
   \lim_{N\to\infty}N^{\beta^*_\ell-2a_j}\mu_{i\ell'}^N(N^{\alpha}\xni)=0
  \end{equation}
  for all $\xni\in\RR^{|\Sp\setminus\In|}_{\geq0}$, $\ell,\ell'\in\ii$,   $i\in\sr$ and and $j\in\fp$, such that $y_i\to\is_\ell$, $\is_\ell\rti \is_{\ell'}$ and $\is_{\ell'}\rti y_j$, then Assumption \ref{ass:rate_to_zero} holds.
 \end{proposition}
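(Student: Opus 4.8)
The plan is to deduce Assumption~\ref{ass:rate_to_zero} from \eqref{eq:mu_tend_to_zero} by rewriting $\mu^N_{i\ell'}$ via the inverse of $\La^N$, turning the hypothesis into an estimate on the time integral $\int_0^\infty\bigl(\exp(s\La^N)\bigr)_{\ell'\ell}\,ds$, and then converting that integral estimate into a pointwise estimate on $\bigl(\exp(t_N\La^N)\bigr)_{\ell'\ell}$, where $t_N=N^{a_j-\beta^*_\ell}\varepsilon$.

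First, since $\La^N$ is a Metzler matrix (its off-diagonal entries are the non-negative constants $\kappa^N_{\ell'\ell}$) whose eigenvalues have negative real part (Remark~\ref{rem:L_not_singular}), the matrices $\exp(s\La^N)$ have non-negative entries for $s\ge0$ and $-(\La^N)^{-1}=\int_0^\infty\exp(s\La^N)\,ds$. Combining this with Lemma~\ref{lem:mu_and_inverse} and the non-negativity of the rate functions, for any $i\in\sr$ with $y_i\to\is_\ell\in\R$,
$$\mu^N_{i\ell'}(\xni)=\sum_{\ell''\in\ii}\lambda^N_{i\ell''}(\xni)\int_0^\infty\bigl(\exp(s\La^N)\bigr)_{\ell'\ell''}\,ds\ \ge\ \lambda^N_{i\ell}(\xni)\int_0^\infty\bigl(\exp(s\La^N)\bigr)_{\ell'\ell}\,ds .$$
Given $\ell,\ell',j$ as in Assumption~\ref{ass:rate_to_zero}, I would pick $i^*\in\sr$ with $y_{i^*}\to\is_\ell\in\R$ and $\beta_{i^*\ell}=\beta^*_\ell$, and $\xni^*$ with $\lambda_{i^*\ell}(\xni^*)>0$ (such $\xni^*$ exists because $y_{i^*}\to\is_\ell\in\R_1\setminus\R_0$); then \eqref{eq:multiscale_beta} gives $\lambda^N_{i^*\ell}(N^\alpha\xni^*)=\Theta(N^{\beta^*_\ell})$. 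Applying \eqref{eq:mu_tend_to_zero} with $i=i^*$ and this $\xni^*$ (admissible, since $y_{i^*}\to\is_\ell$, $\is_\ell\rti\is_{\ell'}$ and $\is_{\ell'}\rti y_j$) and dividing the displayed inequality by $\lambda^N_{i^*\ell}(N^\alpha\xni^*)$ yields
$$\int_0^\infty\bigl(\exp(s\La^N)\bigr)_{\ell'\ell}\,ds=o\bigl(N^{2a_j-2\beta^*_\ell}\bigr).$$

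It remains to pass from this to a bound on the value at $s=t_N$. Put $g(s)=\bigl(\exp(s\La^N)\bigr)_{\ell'\ell}\ge0$; differentiating gives $g'(s)=\La^N_{\ell'\ell'}g(s)+\sum_{\ell''\ne\ell'}\kappa^N_{\ell''\ell'}\bigl(\exp(s\La^N)\bigr)_{\ell''\ell}\ge\La^N_{\ell'\ell'}g(s)$, so with $r^N_{\ell'}=-\La^N_{\ell'\ell'}=\sum_{h\in\ii\cup\fp}\kappa^N_{\ell'h}$ the map $s\mapsto e^{r^N_{\ell'}s}g(s)$ is non-decreasing; hence $g(s)\ge e^{-r^N_{\ell'}(s-t_N)}g(t_N)$ for $s\ge t_N$, which gives $g(t_N)\le r^N_{\ell'}\int_{t_N}^\infty g(s)\,ds=r^N_{\ell'}\,o(N^{2a_j-2\beta^*_\ell})$, i.e.\ $N^{\beta^*_\ell-a_j}g(t_N)\le\bigl(r^N_{\ell'}N^{a_j-\beta^*_\ell}\bigr)o(1)$. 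If the consumption rate satisfies $r^N_{\ell'}=O(N^{\beta^*_\ell-a_j})$, we are done. The complementary regime $r^N_{\ell'}N^{a_j-\beta^*_\ell}\to\infty$ is the main obstacle: there $\is_{\ell'}$ is consumed on a time scale much shorter than $t_N$, and one must argue that $g(t_N)$ is then exponentially small; I expect to obtain this by combining the same monotonicity with the Gershgorin-type exponential decay of $\exp(s\La^N)$ on the irreducible block of $\La^N$ containing $\is_{\ell'}$ (cf.\ Remark~\ref{rem:L_not_singular}), equivalently the phase-type tail estimate \eqref{eq:equivalence_stochastic} of Remark~\ref{rem:prob}. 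The exponent $2a_j$ (rather than $a_j$) in \eqref{eq:mu_tend_to_zero} is precisely the slack needed to absorb the factor $r^N_{\ell'}$ in this transfer; the first two steps are routine manipulations with Lemma~\ref{lem:mu_and_inverse} and \eqref{eq:multiscale_beta}.
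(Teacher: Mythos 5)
Your first two steps are exactly the paper's: Lemma \ref{lem:mu_and_inverse}, the identity $-(\La^N)^{-1}=\int_0^\infty\exp(s\La^N)\,ds$, the non-negativity of the entries of $\exp(s\La^N)$, and the choice of $i^*$ and $\xni^*$ with $\lambda^N_{i^*\ell}(N^\alpha\xni^*)=\Theta(N^{\beta^*_\ell})$ do turn \eqref{eq:mu_tend_to_zero} into $\int_0^\infty\pr{\exp(s\La^N)}_{\ell'\ell}\,ds=o\pr{N^{2a_j-2\beta^*_\ell}}$; this part is correct. The gap is in the transfer to the pointwise value at $t_N=\varepsilon N^{a_j-\beta^*_\ell}$, and it is a genuine one, not a routine detail. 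Your monotone quantity $e^{r^N_{\ell'}s}g(s)$ forces you to pay the factor $r^N_{\ell'}=\sum_{h\in\ii\cup\fp}\kappa^N_{\ell'h}$, which the hypotheses do not control relative to $N^{\beta^*_\ell-a_j}$, and neither of your two proposed escapes works as stated: a uniform bound $\|\exp(t\La^N)\|\leq\Gamma_0e^{-\Gamma_1t}$ with $N$-independent constants is not available here (it is exactly the single-scale structure $\La^N=N^\gamma\La$ of Proposition \ref{prop:same_N} that makes it available there, and the whole point of this paper is that the rates may have different orders), while invoking \eqref{eq:equivalence_stochastic} is circular, since Remark \ref{rem:prob} presents that tail estimate as a sufficient condition for Assumption \ref{ass:rate_to_zero}, i.e.\ for the very statement you are trying to prove.

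The paper closes the argument with a different, rate-free monotone majorant. By Lemma \ref{lem:exp_non_negative}\eqref{item:exp_non_negative} and \eqref{item:exp_monotone}, $g(s)=\pr{\exp(s\La^N)}_{\ell'\ell}\leq h(s)$, where $h(s)=e^\top\exp(s\La^N)e_\ell$ is non-negative and non-increasing in $s$ (the total intermediate mass started from $e_\ell$ can only decrease). For a non-increasing non-negative $h$ one has $t\,h(t)\leq\int_0^t h(s)\,ds\leq\int_0^\infty h(s)\,ds=\sum_{\ell''\in\ii}\int_0^\infty\pr{\exp(s\La^N)}_{\ell''\ell}\,ds$, hence
$$N^{\beta^*_\ell-a_j}g(t_N)\ \leq\ \frac{N^{2\beta^*_\ell-2a_j}}{\varepsilon}\sum_{\ell''\in\ii}\int_0^\infty\pr{\exp(s\La^N)}_{\ell''\ell}\,ds,$$
and the right-hand side tends to $0$ by the integral estimate you already derived, applied to every relevant $\ell''$ (the entries with $\is_\ell\not\rti\is_{\ell''}$ vanish identically; for the rest one may restrict the sum to those $\ell''$ with $\is_{\ell''}\rti y_j$, a set that mass can only leave, so the restricted sum is still non-increasing and is exactly what \eqref{eq:mu_tend_to_zero} controls). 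The factor $t_N^{-1}=\varepsilon^{-1}N^{\beta^*_\ell-a_j}$ is precisely what the exponent $2a_j$ in \eqref{eq:mu_tend_to_zero} is designed to absorb, and no consumption-rate factor ever appears. So your diagnosis of where the difficulty sits is accurate, but the regime you leave open is the entire content of the remaining step, and the tool needed is the monotonicity of the column sums from Lemma \ref{lem:exp_non_negative}\eqref{item:exp_monotone}, not that of $e^{r^N_{\ell'}s}g(s)$.
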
 
 The proof of Proposition \ref{prop:mu_suff_ass} is based on the fact that \eqref{eq:mu_tend_to_zero} implies that the integrals of the expression in Assumption \ref{ass:rate_to_zero} tend to zero, see the proof in Section \ref{sec:proofs}.
 Proposition \ref{prop:mu_suff_ass} offers a convenient way to check Assumption \ref{ass:rate_to_zero}, which rarely can be evaluated by hand outside the special cases considered in Propositions \ref{prop:same_N} and \ref{prop:tree}. Even with the use of symbolic computational software, Assumption \ref{ass:rate_to_zero} might be difficult to verify without choosing fixed values for the rate constants. Condition  \eqref{eq:mu_tend_to_zero}  is  in general not a necessary condition, as shown in the next example. The probabilistic formulation of Assumption \ref{ass:rate_to_zero} given in Remark \ref{rem:prob} also often offers an easier way to check Assumption \ref{ass:rate_to_zero}.
 \begin{example}\label{ex:mu_to_infty}
 Consider the following mass-action system
  \begin{equation*}
   \begin{tikzpicture}
    %states
    \node[state] (S)    at (1,3) {$S$};
    \node[state] (\is1) at (3,3) {$\is_1$};
    \node[state] (\is2) at (3,1) {$\is_2$};
    \node[state] (P_1)  at (5,3) {$P_1$};
    \node[state] (P_2)  at (5,1) {$P_2$};
    %edges
    \path[->] 
     (S)  edge node{$\kappa_1$} (\is1)
     (\is1) edge node{$N^2\kappa_4$} (P_1)   
            edge node[swap]{$N\kappa_2$}   (\is2)   
     (\is2) edge node{$N^{-2}\kappa_3$} (P_2);   
   \end{tikzpicture}
  \end{equation*}
  If $\alpha_S=\alpha_{P_1}=\alpha_{P_2}=0$, then Assumption \ref{ass:rate_to_zero} is satisfied, but \eqref{eq:mu_tend_to_zero} does not hold.  Indeed, in this case
  $$N^{\beta^*_{\is_1}-2a_{P_2}}\mu_{S,\is_2}^N(N^\alpha \xni)=N\frac{\kappa_1\kappa_2}{N^{-1}\kappa_2\kappa_3+\kappa_3\kappa_4}\xni_S,$$
 which is of order $N$. On the other hand,
  \begin{multline*}
   N^{\beta^*_{\is_1}-a_{P_1}}\, e^\top \exp\pr{N^{a_{P_1}-\beta^*_{\is_1}}\varepsilon \La^N} e_{\is_1}=N^{\beta^*_{\is_1}-a_{P_2}}\, e^\top \exp\pr{N^{a_{P_2}-\beta^*_{\is_1}}\varepsilon \La^N} e_{\is_1}=\\
   \exp\Big(-\varepsilon N(N\kappa_4+\kappa_2)\Big)-\frac{N^3\kappa_4\Big(\exp\Big(-\varepsilon N(N\kappa_4+\kappa_2)\Big)-\exp\Big(-\varepsilon\kappa_3/N^2\Big)\Big)}{N^4\kappa_4+N^3\kappa_2-\kappa_3},
  \end{multline*}
  therefore Assumption \ref{ass:rate_to_zero} holds. Note that in this case, we could have used Proposition \ref{prop:tree} to show that Assumption \ref{ass:rate_to_zero} holds, without calculating the exponential matrix explicitly. The reduced system is
  \begin{center}
   \begin{tikzpicture}
    %states
    \node[state] (S)    at (1,2) {$S$};
%     \node[state] (\is1) at (3,3) {$\is_1$};
%     \node[state] (\is2) at (3,1) {$\is_2$};
    \node[state] (P_1)  at (3,3) {$P_1$};
    \node[state] (P_2)  at (3,1) {$P_2$};
    %edges
    \path[->] 
     (S)  edge node{$\frac{N^2\kappa_1\kappa_4}{N^2\kappa_4+N\kappa_2}$} (P_1)
          edge node{$\frac{N\kappa_1\kappa_2}{N^2\kappa_4+N\kappa_2}$}   (P_2);   
   \end{tikzpicture}   
  \end{center}
  and the limiting system is given by
  \begin{center}
   \begin{tikzpicture}
    %states
    \node[state] (S)    at (1,2) {$S$};
%     \node[state] (\is1) at (3,3) {$\is_1$};
%     \node[state] (\is2) at (3,1) {$\is_2$};
    \node[state] (P_1)  at (3,2) {$P_1$};
%     \node[state] (P_2)  at (3,1) {$P_2$};
    %edges
    \path[->] 
     (S)  edge node{$\kappa_1$} (P_1);   
   \end{tikzpicture}   
  \end{center}
 \end{example}
   
    We are now ready to enunciate the following convergence results on compact time intervals:
 \begin{proposition}\label{prop:convergence}
  Let $T>0$ be fixed. Assume  the kinetics $\K^N$ satisfy Assumption \ref{ass:rate_deterministic} for all $N>0$, and that Assumption \ref{ass:single_scale} and \ref{ass:rate_to_zero} hold. Then, if
  $$\lim_{N\to\infty}\|N^{-\alpha}\xni^N(0)-N^{-\alpha}z^N(0)\|=0 \quad\text{and}\quad \xin^N(0)=0,$$
  and if there exists a constant $\Upsilon>0$, such that
  $$\sup_{t\in[0,T], N\in\NN}\|N^{-\alpha}z^N(t)\|<\Upsilon,$$
  we have
  \begin{equation}\label{eq:difference_to_zero}
   \lim_{N\to\infty}\sup_{t\in[0,T]}\|N^{-\alpha}\xni^N(t)-N^{-\alpha}z^N(t)\|=0.
  \end{equation}
 \end{proposition}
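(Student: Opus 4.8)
The plan is to rescale by $N^{-\alpha}$, rewrite the evolution of $N^{-\alpha}\xni^N$ as the rescaled vector field of the reduced system plus a remainder whose \emph{time integral} is explicit, show that this integrated remainder tends to $0$ uniformly on $[0,T]$, and close with Gronwall's inequality together with a stopping-time (bootstrapping) argument that keeps $N^{-\alpha}\xni^N$ in a fixed compact ball.

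Since $\xin^N(0)=0$, the identities of Section \ref{sec:int_effect} apply; in particular $\tfrac{d}{dt}\xni^N(t)=\sum_{j\in\fp}y_j\vkappaj^N\xin^N(t)+\sum_{y_i\to y_j\in\R_1}\pi(y_j-y_i)\lambda^N_{ij}(\xni^N(t))$. Writing $F^N(\xni):=N^{-\alpha}\sum_{y_i\to y_j\in\Rr}(y_j-y_i)\lambdarN_{ij}(\xni)$ for the rescaled right-hand side of the reduced system, one checks, using Lemma \ref{lem:mu_and_inverse}, the identity \eqref{eq:sum}, and the elementary fact that $\pi(\is_\ell-y_i)=-y_i$ for $y_i\in\SR$, that
$$N^{-\alpha}\tfrac{d}{dt}\xni^N(t)=F^N\big(N^{-\alpha}\xni^N(t)\big)+R^N(t)\qquad\text{with}\qquad R^N(t)=N^{-\alpha}\sum_{j\in\fp}y_j\,\vkappaj^N\Big(\xin^N(t)+(\La^N)^{-1}\Lambda^N(\xni^N(t))\Big),$$
i.e.\ $F^N$ is exactly the ``Markovian'' (quasi-steady-state) version of $\tfrac{d}{dt}\xni^N$ obtained by replacing $\xin^N$ by $-(\La^N)^{-1}\Lambda^N(\xni^N)$. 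Differentiating \eqref{eq:solution_for_\is} gives $\tfrac{d}{dt}\xin^N=\La^N\xin^N+\Lambda^N(\xni^N)$, hence $\xin^N(t)+(\La^N)^{-1}\Lambda^N(\xni^N(t))=(\La^N)^{-1}\tfrac{d}{dt}\xin^N(t)$; integrating and using $\xin^N(0)=0$ yields the key identity
$$\int_0^t R^N(s)\,ds=N^{-\alpha}\sum_{j\in\fp}y_j\,\vkappaj^N(\La^N)^{-1}\xin^N(t).$$
Note that $R^N$ itself need not be uniformly small — it may be large on a short initial layer while the intermediates build up — so it is essential to work with its integral.

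The core of the proof is to show $\sup_{t\in[0,T]}\big\|N^{-\alpha}\sum_{j\in\fp}y_j\vkappaj^N(\La^N)^{-1}\xin^N(t)\big\|\to0$. By the probabilistic reading of Section \ref{sec:equivalent} (see also Remark \ref{rem:prob}), the $\ell$-th entry of $\vkappaj^N(\La^N)^{-1}$ is $-\pi^N_{\ell j}$ with $\sum_j\pi^N_{\ell j}=1$, so the quantity is bounded by $C\sum_\ell N^{-m_\ell}\xin^N_\ell(t)$ with $m_\ell=\min\{a_j:\is_\ell\rti y_j\}$; since $\xin^N(t)=\int_0^t\exp(\La^N(t-s))\Lambda^N(\xni^N(s))\,ds$ and $\Lambda^N_\ell(\xni^N(s))=O(N^{\beta^*_\ell})$ on the compact ball where $N^{-\alpha}\xni^N$ stays, one is reduced to bounding $N^{\beta^*_{\ell_0}-a_j}\int_0^t\exp(\La^N u)_{\ell\ell_0}\,du$ for $\is_{\ell_0}$ directly produced and $\is_{\ell_0}\rti\is_\ell\rti y_j$, which after rescaling time by $N^{a_j-\beta^*_{\ell_0}}$ and splitting the integral at a threshold of that order is controlled by Assumption \ref{ass:rate_to_zero} (using, for the far part of the integral, monotonicity of the phase-type survival functions $u\mapsto\sum_{\ell'}\exp(\La^N u)_{\ell'\ell_0}$ and the boundedness of the window $t\le T$). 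This order-of-magnitude bookkeeping is the step I expect to be the main obstacle. In parallel one establishes that, on the fixed ball $\bar B(0,\Upsilon+1)$, the maps $F^N$ are uniformly (in $N$) bounded and uniformly close to a family of maps sharing one Lipschitz constant $L$: $\mu^N_{i\ell}(\xni)$ is an $\xni$-independent linear combination of the $\lambda^N_{i\cdot}(\xni)$, the coefficients $N^{-\alpha}(y_j-y_i)\vkappaj^N(\La^N)^{-1}e_\ell\,N^{\beta_{i\ell}}$ are bounded by Remark \ref{rem:equivalent}, and $N^{-\beta_{ij}}\lambda^N_{ij}(N^\alpha\cdot)$ converges uniformly on compacts to the locally Lipschitz $\lambda_{ij}$ by \eqref{eq:multiscale_beta}.

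Finally, let $\tau^N=\inf\{t\ge0:\|N^{-\alpha}\xni^N(t)\|\ge\Upsilon+1\}$. On $[0,T\wedge\tau^N]$ both $N^{-\alpha}\xni^N$ and $N^{-\alpha}z^N$ lie in $\bar B(0,\Upsilon+1)$, and
$$N^{-\alpha}\xni^N(t)-N^{-\alpha}z^N(t)=\big(N^{-\alpha}\xni^N(0)-N^{-\alpha}z^N(0)\big)+\int_0^t\big(F^N(N^{-\alpha}\xni^N(s))-F^N(N^{-\alpha}z^N(s))\big)ds+\int_0^t R^N(s)\,ds;$$
by the previous paragraph the first and last terms are bounded by some $\epsilon_N\to0$ and the middle integrand by $L\|N^{-\alpha}\xni^N(s)-N^{-\alpha}z^N(s)\|+o(1)$, so Gronwall's inequality gives $\sup_{[0,T\wedge\tau^N]}\|N^{-\alpha}\xni^N-N^{-\alpha}z^N\|\le\epsilon_N'e^{LT}$ with $\epsilon_N'\to0$. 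Since $\|N^{-\alpha}z^N\|<\Upsilon$, for all large $N$ this forces $\|N^{-\alpha}\xni^N(t)\|<\Upsilon+1$ on $[0,T\wedge\tau^N]$, hence $\tau^N>T$, and \eqref{eq:difference_to_zero} follows.
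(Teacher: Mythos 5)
Your proof is correct and is essentially the paper's own argument in lightly different clothing: your integrated remainder $\int_0^t R^N(s)\,ds=N^{-\alpha}\sum_{j}y_j\vkappaj^N(\La^N)^{-1}\xin^N(t)$ is exactly the paper's term $A^N(t)$ (the paper reaches it by Fubini on the double integral rather than via the ODE for $\xin^N$), the Gronwall step on the Lipschitz reduced field is the paper's $B^N$ estimate, and the a priori bound is obtained there by truncating the kinetics outside a ball rather than by your stopping-time bootstrap --- both standard localizations. One step needs tightening: in the far part of the split integral you bound $\exp(\La^N u)_{\ell\ell_0}$ by the \emph{full} survival function $\sum_{\ell'}\exp(\La^N u)_{\ell'\ell_0}$, but Assumption \ref{ass:rate_to_zero} controls $N^{\beta^*_{\ell_0}-a_j}e_{\ell'}^\top\exp(N^{a_j-\beta^*_{\ell_0}}\varepsilon\La^N)e_{\ell_0}$ only for $\ell'$ with $\is_{\ell'}\rti y_j$, so the sum must be restricted to the set $A_j=\{\ell'\colon \is_{\ell'}\rti y_j\}$; this is harmless because $\ell\in A_j$ in your setting (you have already discarded the others via $\pi^N_{\ell j}$) and the partial sum $u\mapsto\sum_{\ell'\in A_j}\exp(\La^N u)_{\ell'\ell_0}$ is still non-increasing, since $A_j$ cannot be entered from $\In\setminus A_j$ (if $\is_{\ell''}\to\is_{\ell'}$ with $\ell'\in A_j$ then $\ell''\in A_j$). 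This is precisely the role of the paper's bound \eqref{eq:k_leq_La^N_new}.
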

 
 \begin{theorem}\label{thm:convergence}
 Let $T>0$ be fixed. Assume  the kinetics $\K^N$ satisfy Assumption \ref{ass:rate_deterministic} for all $N>0$, and that Assumption \ref{ass:single_scale}, \ref{ass:limiting_system} and \ref{ass:rate_to_zero} hold. Then, if
  $$\lim_{N\to\infty}\|N^{-\alpha}\xni^N(0)-z(0)\|=0 \quad\text{and}\quad \xin^N(0)=0$$
  and if there exists a constant $\Upsilon>0$, such that
  $$\sup_{t\in[0,T]}\|z(t)\|<\Upsilon,$$
  we have
   $$\lim_{N\to\infty}\sup_{t\in[0,T]}\|N^{-\alpha}\xni^N(t)-z(t)\|=0.$$ 
 \end{theorem}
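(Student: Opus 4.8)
The plan is to interpolate between the original system and the limiting system through the reduced reaction system and Proposition \ref{prop:convergence}. Let $z^N$ solve the reduced reaction system $(\Sp\setminus\In,\C\setminus\In,\Rr,\widetilde{\K}^N)$ with initial condition $z^N(0)=\xni^N(0)$, and put $w^N(t)=N^{-\alpha}z^N(t)$. Whenever both trajectories exist,
$$\|N^{-\alpha}\xni^N(t)-z(t)\|\le\|N^{-\alpha}\xni^N(t)-w^N(t)\|+\|w^N(t)-z(t)\|.$$
The first summand will be controlled by Proposition \ref{prop:convergence}, whose remaining hypotheses (beyond Assumptions \ref{ass:rate_deterministic}, \ref{ass:single_scale} and \ref{ass:rate_to_zero}, all assumed here) fall into place once we know that $z^N$ exists on $[0,T]$ and that $\sup_{t\in[0,T]}\|w^N(t)\|$ is bounded uniformly in $N$: indeed $\xin^N(0)=0$ is assumed and $\|N^{-\alpha}\xni^N(0)-N^{-\alpha}z^N(0)\|=0$ by the choice of $z^N(0)$. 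So the crux is to prove $\sup_{t\in[0,T]}\|w^N(t)-z(t)\|\to0$, which will simultaneously supply the missing ingredients.

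For the second summand I would invoke continuous dependence of ODE solutions on the right-hand side. Rescaling the reduced system's equation $\tfrac{d}{dt}z^N=\sum_{y_i\to y_j\in\Rr}(y_j-y_i)\lambdarN_{ij}(z^N)$, the function $w^N$ solves $\dot w^N=F^N(w^N)$ with
$$F^N(w)=\sum_{y_i\to y_j\in\Rr}\big(N^{\betar_{ij}}N^{-\alpha}(y_j-y_i)\big)\big(N^{-\betar_{ij}}\lambdarN_{ij}(N^\alpha w)\big).$$
By Assumption \ref{ass:single_scale} the first factor tends to $y_j^{(i,j)}-y_i^{(i,j)}$, and by Assumption \ref{ass:limiting_system} the second tends to $\lambdal_{ij}(w)$ uniformly on compact sets; hence $F^N\to F$ uniformly on compact sets, where $F(w)=\sum_{y_i\to y_j\in\Rr}(y_j^{(i,j)}-y_i^{(i,j)})\lambdal_{ij}(w)$ is precisely the vector field of the limiting reaction system, so that $z$ solves $\dot z=F(z)$.

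Fix $R>\Upsilon$. Since $F$ is a finite linear combination of the locally Lipschitz functions $\lambdal_{ij}$, it is Lipschitz on the ball $B_R=\{w:\|w\|\le R\}$, say with constant $M_R$, and $\varepsilon_N:=\sup_{w\in B_R}\|F^N(w)-F(w)\|\to0$. As $w^N(0)=N^{-\alpha}\xni^N(0)\to z(0)$ with $\|z(0)\|<\Upsilon$, we have $w^N(0)\in B_\Upsilon$ for $N$ large, and a Gronwall estimate gives, on any initial subinterval of $[0,T]$ along which $w^N$ stays in $B_R$,
$$\|w^N(t)-z(t)\|\le\big(\|w^N(0)-z(0)\|+T\varepsilon_N\big)e^{M_R T}.$$
For $N$ large the right-hand side is below $R-\Upsilon$; a continuity/bootstrap argument then prevents $w^N$ from ever leaving $B_R$ on $[0,T]$, so $z^N$ is defined on all of $[0,T]$, $\sup_{t\in[0,T]}\|w^N(t)\|\le R$, and $\sup_{t\in[0,T]}\|w^N(t)-z(t)\|\to0$. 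Feeding this into Proposition \ref{prop:convergence} (for the large $N$ under consideration, with $\Upsilon$ replaced by $R$; restricting to such $N$ is harmless since the claim concerns only the limit) yields $\sup_{t\in[0,T]}\|N^{-\alpha}\xni^N(t)-w^N(t)\|\to0$, and the triangle inequality above closes the argument. The step I expect to be delicate is this last bootstrap: the convergences in Assumptions \ref{ass:single_scale} and \ref{ass:limiting_system} and the Lipschitz bound on $F$ are only available on compact sets, so the confinement of $w^N$ — and with it the existence of $z^N$ up to time $T$ — must be derived, not assumed, from the a priori bound $\sup_{t\in[0,T]}\|z(t)\|<\Upsilon$.
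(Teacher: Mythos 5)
Your proposal is correct and follows essentially the same route as the paper: both proofs insert the reduced solution $z^N$ between $\xni^N$ and $z$, prove $\sup_{t\in[0,T]}\|N^{-\alpha}z^N(t)-z(t)\|\to 0$ from Assumptions \ref{ass:single_scale} and \ref{ass:limiting_system}, and then invoke Proposition \ref{prop:convergence} together with the triangle inequality. The only real difference is how the a priori confinement of $N^{-\alpha}z^N$ to a compact set is obtained: the paper truncates the kinetics $\lambdarN_{ij}$ outside a ball of radius $\Upsilon+\delta$ so that the rescaled vector fields converge uniformly on the whole space, observes that the truncated limiting solution still equals $z$ on $[0,T]$, and then argues that for large $N$ the truncation is never activated, so $\widetilde{z}^N=z^N$; you instead run a direct Gronwall estimate on the untruncated system combined with a continuation/bootstrap argument keeping $w^N$ inside $B_R$ for $R>\Upsilon$. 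The two devices are interchangeable here — the truncation packages the bootstrap so that no continuation argument is needed, while your version avoids introducing modified kinetics and makes the Gronwall step (which the paper leaves implicit) explicit; your closing remark correctly identifies the confinement of $w^N$ as the delicate point, and your handling of it is sound.
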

 The proofs of the two statements are given in Section \ref{sec:proofs}.
 
 \setcounter{example}{0}
\begin{example}[part 3]
 In this example $\La^N$ is a $1\times 1$ matrix, and it is not difficult to see that Assumption \ref{ass:rate_to_zero} holds if and only if $\alpha_S<\max\{\eta_2,\eta_3\}$. Then,  Proposition \ref{prop:convergence} applies, and we obtain that on a compact interval $[0,T]$, the rescaled concentrations $x^N_E(t)$, $N^{-\alpha_S}x^N_S(t)$ and $N^{-\alpha_P}x^N_P(t)$ are uniformly approximated by $z^N_E(t)$, $N^{-\alpha_S}z^N_S(t)$ and $N^{-\alpha_P}z^N_P(t)$, provided that $N^{-\alpha}(\xni^N(0)-z^N(0))$ goes to zero as $N\to\infty$.  Here $z^N(t)$ is a solution to the reduced reaction system in Example \ref{ex:michaelis_menten} (part 1).
 In this case the limiting reaction system exists, see Example \ref{ex:michaelis_menten} (part 2). Hence  the rescaled trajectory $N^{-\alpha}\xni^N(t)$ can also be approximated in the sense of Theorem \ref{thm:convergence}.
\end{example}

\begin{example}[part 3]   We have
 $$\mu^N_{E+S,\is_1}(x)=\frac{N^4\kappa_1\kappa_3x_Ex_S}{N^6\kappa_3\kappa_4}\quad\text{and}\quad\mu^N_{E+S,\is_2}(x)=\frac{N^3\kappa_1\kappa_2x_Ex_S}{N^6\kappa_3\kappa_4}.$$
 Therefore,  if $\alpha_E=0$ and $\alpha_S<1$, it follows from Proposition \ref{prop:mu_suff_ass}  that Assumption \ref{ass:rate_to_zero} is satisfied. Using a symbolic computational software, it can be verified that Assumption  \ref{ass:rate_to_zero} is satisfied for $\alpha_E=0$ and $\alpha_S<2$, thus for higher values of $\alpha_S$ than given by Proposition \ref{prop:mu_suff_ass}.
 
 In this case the reduced and the limiting reaction systems coincide, so when Assumption \ref{ass:rate_to_zero} holds we can use either Proposition \ref{prop:convergence} or Theorem \ref{thm:convergence} to approximate the trajectories uniformly.
\end{example}

 \section{Discussion}\label{sec:discussion}
 
  \subsection{Long term behaviour}
 
 A natural question arising from Proposition \ref{prop:convergence} and Theorem \ref{thm:convergence} is whether the reduced reaction system or the limiting reaction system also approximates the limit behaviour of the original system as $t\rightarrow\infty$. Specifically, assuming that the limiting system exists, we inquire whether it holds that
 \begin{equation}\label{eq:conv_lim_deterministic}
  \lim_{N\to\infty}\lim_{t\rightarrow\infty}\|N^{-\alpha}\xni^N(t)-z(t)\|\to 0,
 \end{equation}
 when the above limit exist. The answer is that (\ref{eq:conv_lim_deterministic}) may not hold. Consider for example the case where $z(0)$ is an unstable equilibrium point for the reduced reaction network. Then $\lim_{t\rightarrow\infty}z(t)=z(0)$, while in the original reaction network with intermediates, a small perturbation given by the presence of intermediate species may push $N^{-\alpha}\xni^N(t)$ away from the repulsive point $N^{-\alpha}\xni^N(0)=z(0)$. 
   
  Consider the following deterministic mass action system:
  \begin{center}
   \begin{tikzpicture}
    %states
    \node[state] (empty)  at (1,1.5)   {$0$};
    \node[state] (A)      at (5,1.5)   {$A$};
    \node[state] (2A)     at (7.5,1.5) {$2A$};
    \node[state] (3A)     at (10.5,1.5) {$3A$};
    \node[state] (\is)      at (3,2) {$\is$};
    %edges
    \path[->] 
     (A)     edge[bend left] node{$11$} (empty)
     (empty) edge            node{$6$}  (\is)
     (\is)     edge            node{$N$}  (A)
     (2A)    edge[bend left] node{$6$}  (3A)
     (3A)    edge[bend left] node{$1$}  (2A);
   \end{tikzpicture}
  \end{center}
  The assumptions of Theorem \ref{thm:convergence} are fulfilled and the reduced reaction network is given by
  \begin{center}
   \begin{tikzpicture}
    %states
    \node[state] (empty)  at (1.5,1.5) {$0$};
    \node[state] (A)      at (4.5,1.5) {$A$};
    \node[state] (2A)     at (7.5,1.5) {$2A$};
    \node[state] (3A)     at (10.5,1.5) {$3A$};
    %edges
    \path[->] 
     (A)     edge[bend left] node{$11$} (empty)
     (empty) edge[bend left] node{$6$}  (A)
     (2A)    edge[bend left] node{$6$}  (3A)
     (3A)    edge[bend left] node{$1$}  (2A);
   \end{tikzpicture}
  \end{center}
  The ODE governing the dynamics of the reduced reaction network, which does not depend on $N$, is given by
  \begin{align*}
   \frac{d}{dt}z(t)&=-z(t)^3+6z(t)^2-11z(t)+6\\
   &=-\pr{z(t)-1}\pr{z(t)-2}\pr{z(t)-3}\doteqdot f\pr{z(t)}.
  \end{align*}
  Note that 2 is an unstable equilibrium point of the above dynamical system. We will show that if we assume $N^{-\alpha}\xni^N(0)=z(0)=2$ and $\xin^N(0)=0$, then (\ref{eq:conv_lim_deterministic}) does not hold.
  
  The ODE system governing the dynamics of the original network  is
  %$$\begin{cases}
  \begin{align*}
     \dfrac{d}{dt}\xin^N(t)&=6-N\xin^N(t)\\ %[.6em]
     \dfrac{d}{dt}\xni^N_A(t)&=-\xni^N(t)^3+6\xni^N(t)^2-11\xni^N(t)+N\xin^N(t).
    %\end{cases}.$$
    \end{align*}
  This means that $\xin^N(t)=6(1-e^{-Nt})/N$ and 
  $$\frac{d}{dt}\xni^N(t)=-\xni^N(t)^3+6\xni^N(t)^2-11\xni^N(t)+6\pr{1-e^{-Nt}}\doteqdot g^N_t\pr{\xni^N(t)}$$
  Since for any $N\geq1$ and $t>0$, we have  $g^N_t(x)<f(x)$, and since $f(x)<0$ for any $x\in\pr{1,2}$, then
  $$\lim_{t\rightarrow\infty}\xni^N(t)\leq 1.$$
  It is possible to prove a more precise result, namely that $\lim_{t\rightarrow\infty}\xni^N(t)= 1$, and it is worth noting that 1 is a stable steady state of the reduced reaction system. Since
  $$\lim_{t\rightarrow\infty}z(t)=z(0)=2,$$
  we have that \eqref{eq:conv_lim_deterministic} does not hold.
  
  Few questions are, however, left open by this counterexample. First of all, if instead of $\xin^N(0)=0$ we had $\xin^N(0)=6/N$, then we would have $\xin^N(t)=6/N$ for any $t\in\RR_{\geq0}$, and the dynamics of the non-intermediate species in the full and in the reduced reaction network would coincide for all $t\ge 0$. This is true in general, whenever we can impose $d \xin^N(t)/dt$ to be  0 for all time. This would imply that the dynamics  of the system is confined within the so called \emph{slow manifold}. It is natural to wonder when this is possible.  Another natural question is whether \eqref{eq:conv_lim_deterministic} is false only if some instability of the system is present, as in the previous case. In general, what conditions could assure \eqref{eq:conv_lim_deterministic}?

  \subsection{Some relationships to other approaches}
  
It has previously been demonstrated that under certain conditions, an ODE system with fast and slow reactions (two categories only) might be transformed into an equivalent ODE system with fast and slow variables for which Tikhonov's approach is applicable \cite{schauer-heinrich,goeke-walcher}. In general, the ODE systems we consider do not fulfil the requirements for this transformation to be valid. It is, however, worth pointing out that a standard assumption in Tikhonov's approach, namely that the eigenvalues of the Jacobian of the fast subsystem have negative real parts, is also fulfilled in our case (here the Jacobian corresponds to the matrix $L^N$ in \eqref{eq:laplacian}).

Reference \cite{radulescu:linear} also provides a method to reduced a multiscale reaction system to a smaller reaction system. Reactions are removed iteratively   in such a way that only the reactions with the highest rates remain. If a reaction system can be reduced by our method as well as by their method, the two reduced reaction systems might not agree; as illustrated by the following example:
    \begin{equation*}
   \begin{tikzpicture}[baseline={(current bounding box.center)}]
   %states
   \node[state] (0)   at (0,2) {$0$};
   \node[state] (\is) at (2,2) {$\is$};
   \node[state] (A)   at (2,1) {$A$};
   \node[state] (B)  at (4,2) {$B$};
   %\node[state] (B)   at (6,2) {$B$};
   %edges
   \path[->] 
    (0)   edge node{$\kappa_1N$}   (\is)
    (\is) edge node{$\kappa_2N^3$} (B)
    (\is) edge node{$\kappa_3N^2$}   (A);
  %  (2B)  edge node{$\kappa_4/N$}  (B);
   \end{tikzpicture}
  \end{equation*}
  Let $x^N(t)$ denote the solution to the system for some initial condition $x^N(0)$. Here, the concentration of the species $A$ grows with rate of order $O(1)$, while the concentration of the species $B$ changes with rate of order $O(N)$. In accordance with these rates, we assume that the abundance of the species $A$ and $B$ are such that $x^N_A(0)=O(1)$ and $x^N_B(0)=O(N)$.
 
    In \cite{radulescu:linear}, the reduction is performed such that only the reaction consuming $\is$ with the highest rate is kept, and this leads to the following reduced reaction system
  \begin{equation*}
   \begin{tikzpicture}[baseline={(current bounding box.center)}]
   %states
   \node[state] (0)   at (0,2) {$0$};
%    \node[state] (\is) at (2,2) {$\is$};
%    \node[state] (A)   at (2,1) {$A$};
   \node[state] (B)  at (2,2) {$B$};
%   \node[state] (B)   at (4,2) {$B$};
   %edges
   \path[->] 
    (0)   edge node{$\kappa_1N$}   (B);
%     (\is) edge node{$\kappa_2N^2$} (2B)
%     (\is) edge node{$\kappa_3N$}   (A)
    %(2B)  edge node{$\dfrac{\kappa_4}{N}$}  (B);
   \end{tikzpicture}
  \end{equation*}
  where the concentration of $A$ remains constant. However, in the original system, the concentration of  $A$ grows at rate  $O(1)$, so its dynamics is not well captured by the reduced model. Note that the concentration of species $A$ could be important for the dynamics of another part of the network, for example if $A$ is an enzyme catalysing a reaction of interest. If that is the case, using the above reduced system could lead to an important error.
  
  The reduced system  we propose, which correctly approximate the dynamics of the original system by Theorem \ref{thm:convergence}, is the following:
  \begin{equation*}
   \begin{tikzpicture}[baseline={(current bounding box.center)}]
   %states
   \node[state] (0)   at (0,2) {$0$};
%    \node[state] (\is) at (2,2) {$\is$};
   \node[state] (A)   at (2,1) {$A$};
   \node[state] (2B)  at (2,2) {$B$};
   %\node[state] (B)   at (4,2) {$B$};
   %edges
   \path[->] 
    (0)   edge node{$\kappa_1$}    (B)
    (0) edge node[swap]{$\dfrac{\kappa_1\kappa_3}{\kappa_2}$} (A);
  %  (2B)  edge node{$\kappa_4$}  (B);
   \end{tikzpicture}
  \end{equation*}
  Denote by $z(t)$ the solution of the latter, and suppose $z(0)=\lim_{N\to\infty}x_A^N(0)N^{-1}x_B^N(0)$. Then, by Theorem \ref{thm:convergence}, on compact time intervals, $z_A(t)$ and $z_B(t)$ provide a uniform limit for $x^N_A(t)$ and $N^{-1}x^N_B(t)$, respectively.

  \section{Proofs}\label{sec:proofs}
  
 This section contains  the proofs of Lemma \ref{lem:mu_and_inverse}, Proposition \ref{prop:mu_suff_ass}, Proposition \ref{prop:convergence} and Theorem \ref{thm:convergence}. 
 
 \subsection*{Proof of Lemma \ref{lem:mu_and_inverse}}
  The result does not depend on $N$, thus for the sake of simplicity $N$ is suppressed in the notation of this proof. Consider the Laplacian matrix in \eqref{eq:laplacian}. The first $|\In|$ columns (and rows) are indexed by $\ii$, and let $q$ be the index of the last column (row). By the matrix tree theorem \cite{tutte:matrix_tree} we have
  $$\mu_{i\ell}\pr{\xni}=\frac{\sum_{\sigma\in\theta_{i,\xni}\pr{\is_\ell}}w(\sigma)}{\sum_{\sigma\in\theta_{i,\xni}\pr{\star}}w(\sigma)}
  =\frac{\det \pr{\La_i^{\xni}}_{\pr{\ell,\ell}}}{\det \La},$$
  where  $\pr{\La_i^{\xni}}_{\pr{\ell,\ell'}}$ are the minors of $\La_i^{\xni}$. Since the last row of $\La_i^{\xni}$ is minus the sum of the other rows, we have
  $$
   \det \pr{\La_i^{\xni}}_{\pr{\ell,\ell}}=(-1)^{\ell+|\In| +1}\det \pr{\La_i^{\xni}}_{\pr{q,\ell}}=-\det\pr{\La_i^{\xni}}_{\pr{q,\widehat{\ell}\,}},
  $$
  where $\pr{\La_i^{\xni}}_{\pr{q,\widehat{\ell}\,}}$ denotes the matrix $\La_i^{\xni}$ with the last row eliminated, the column indexed by $\ell$ replaced by the column $\lambda_i(\xni)$, and the last column eliminated. The last equality follows from changing the order of the columns. Moreover,  by Cramer's Rule we have
  $$\mu_{i\ell}\pr{\xni}=\frac{-\det\pr{\La_i^{\xni}}_{\pr{q,\widehat{\ell}\,}}}{\det \pr{\La_i^{\xni}}_{\pr{q,q}}}=-\pr{\La^{-1}\lambda_i(\xni)}_{\ell},$$
  which concludes the proof.\qed 

  \subsection*{Preliminary results}
 
  Before proving Proposition \ref{prop:mu_suff_ass}, Proposition \ref{prop:convergence} and Theorem \ref{thm:convergence}, we need some preliminary results. In order to prove Proposition \ref{prop:mu_suff_ass}, only the first lemma is necessary, which is concerned with some properties of the matrix $\exp\pr{\La^N t}$.
 
 \begin{lemma}\label{lem:exp_non_negative}
  The following statements are true.
  \begin{enumerate}[i)]
   \item\label{item:exp_non_negative} For any $t>0$, any entry of the matrix $\exp\pr{\La^N t}$ is non-negative. 
   \item\label{item:exp_to_zero} We have
   $$\lim_{t\to\infty}\exp\pr{\La^N t}=0.$$
   \item\label{item:exp_monotone} For $0\leq s\leq t$, we have that
   $$e^\top\exp\pr{\La^N s}\geq e^\top\exp\pr{\La^N t}.$$
   In particular, for $s=0$ and any $t>0$, $e^\top\exp\pr{\La^N t}\leq e^\top$.
   \item\label{item:inverse_non_negative} Any entry of the matrix $-(\La^N)^{-1}$ is non-negative.
   \item\label{item:exp_monotone_with_inverse} For any non-negative vector $\xin\in\R_{\geq0}^{|\In|}$ and  any $0\leq s\leq t$,
   $$0\leq-\vkappaj^N(\La^N)^{-1}\exp(\La^N t)\xin\leq -\vkappaj^N(\La^N)^{-1}\exp(\La^N s)\xin.$$
   In particular, for $s=0$ and any $t>0$,
   $$0\leq-\vkappaj^N(\La^N)^{-1}\exp(\La^N t)\xin\leq -\vkappaj^N(\La^N)^{-1}\xin.$$
  \end{enumerate}
 \end{lemma}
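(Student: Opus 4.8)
The plan is to derive everything from two structural features of $\La^N$: it is a Metzler matrix, since its off-diagonal entries are $\La^N_{\ell\ell'}=\kappa^N_{\ell'\ell}\ge 0$; and by Remark~\ref{rem:L_not_singular} all of its eigenvalues have strictly negative real part, so it is in particular invertible. For (i) I would write $\La^N=-cI+M$ with $c>0$ large enough that $M:=\La^N+cI$ has all non-negative entries. Then $\exp(\La^N t)=e^{-ct}\exp(Mt)$, and $\exp(Mt)=\sum_{k\ge 0}(Mt)^k/k!$ is entrywise non-negative for $t\ge 0$ because $M$ is; hence so is $\exp(\La^N t)$. Part (ii) is then immediate: negativity of the spectral abscissa of $\La^N$ gives $\|\exp(\La^N t)\|\le\Gamma_0 e^{-\Gamma_1 t}$ for positive constants $\Gamma_0,\Gamma_1$, exactly as in the proof of Proposition~\ref{prop:same_N}, so $\exp(\La^N t)\to 0$.

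For (iii) the key observation is a sign computation: the $\ell'$th entry of the row vector $e^\top\La^N$ is $\sum_{\ell\in\ii}\La^N_{\ell\ell'}=\sum_{\ell\neq\ell'}\kappa^N_{\ell'\ell}-\sum_{h\in\ii\cup\fp}\kappa^N_{\ell'h}=-\sum_{j\in\fp}\kappa^N_{\ell'j}\le 0$ (using that $\ii$ and $\fp$ are disjoint index sets and that there are no self-loops), so $e^\top\La^N\le 0$ entrywise. Differentiating, $\tfrac{d}{dt}\bigl(e^\top\exp(\La^N t)\bigr)=\bigl(e^\top\La^N\bigr)\exp(\La^N t)\le 0$ entrywise by (i), so $t\mapsto e^\top\exp(\La^N t)$ is entrywise non-increasing, which is the claimed inequality; the ``in particular'' statement is the case $s=0$, since $e^\top\exp(\La^N 0)=e^\top$. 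For (iv), integrating $\tfrac{d}{dt}\exp(\La^N t)=\La^N\exp(\La^N t)$ gives $\int_0^T\exp(\La^N t)\,dt=(\La^N)^{-1}(\exp(\La^N T)-I)$, which by (ii) converges to $-(\La^N)^{-1}$ as $T\to\infty$; since the integrand is entrywise non-negative by (i), so is $-(\La^N)^{-1}=\int_0^\infty\exp(\La^N t)\,dt$.

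For (v), note that $\vkappaj^N=(\kappa^N_{\ell j})_{\ell\in\ii}$ is a non-negative row vector, so $p^\top:=-\vkappaj^N(\La^N)^{-1}=\vkappaj^N\bigl(-(\La^N)^{-1}\bigr)$ is entrywise non-negative by (iv). For $\xin\in\RR^{|\In|}_{\ge 0}$ and $t\ge 0$ we then have $-\vkappaj^N(\La^N)^{-1}\exp(\La^N t)\xin=p^\top\exp(\La^N t)\xin\ge 0$ by (i), which is the lower bound; and since $p^\top\La^N=-\vkappaj^N(\La^N)^{-1}\La^N=-\vkappaj^N\le 0$, we get $\tfrac{d}{dt}\bigl(p^\top\exp(\La^N t)\xin\bigr)=-\vkappaj^N\exp(\La^N t)\xin\le 0$ by (i), so $t\mapsto p^\top\exp(\La^N t)\xin$ is non-increasing, which is the upper bound; the ``in particular'' case is $s=0$. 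There is no deep obstacle: the whole lemma follows from the Metzler structure of $\La^N$ together with its spectral stability, and the only steps deserving care are the two sign identities $e^\top\La^N\le 0$ and $p^\top\La^N=-\vkappaj^N$ (the latter using invertibility of $\La^N$ from Remark~\ref{rem:L_not_singular}), after which (iii), (iv) and (v) are each a one-line differentiation or integration argument resting on the non-negativity of $\exp(\La^N t)$ established in (i).
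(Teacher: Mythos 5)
Your proof is correct, and it takes a more algebraic route than the paper's. For part (i) the paper argues dynamically: with $\Lambda^N\equiv 0$ the columns of $\exp(\La^N t)$ are solutions of the intermediate subsystem started from unit vectors, hence non-negative as concentrations; you instead use the Metzler decomposition $\La^N=-cI+M$ with $M\geq 0$, which is more self-contained. For (ii)--(iii) the paper again reasons from the chemistry (the total intermediate concentration can only decrease when nothing is produced, since all stoichiometric coefficients are one), whereas you make this precise by the column-sum identity $(e^\top\La^N)_{\ell'}=-\sum_{j\in\fp}\kappa^N_{\ell' j}\leq 0$ and a one-line differentiation; this is a welcome sharpening of an argument the paper leaves somewhat informal. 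For (iv) the paper invokes the theory of non-singular M-matrices (via Gershgorin, with a citation), while your identity $-(\La^N)^{-1}=\int_0^\infty\exp(\La^N t)\,dt$ derives the same conclusion from (i) and (ii) without external machinery. For (v) the two arguments are essentially the same in disguise: the paper writes $-\vkappaj^N(\La^N)^{-1}\exp(\La^N t)\xin=\int_t^\infty\vkappaj^N\exp(\La^N u)\xin\,du$ and uses monotonicity of the integral in its lower limit, which is exactly the antiderivative of your computation $\frac{d}{dt}\bigl(p^\top\exp(\La^N t)\xin\bigr)=-\vkappaj^N\exp(\La^N t)\xin\leq 0$. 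In short, the paper's proof buys physical intuition at the cost of some rigour in (ii)--(iii) and an external reference in (iv); yours buys a fully elementary, verifiable chain of sign computations. Both are valid.
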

 
 \begin{proof}
  If we put $\Lambda^N(t)\equiv 0$, then from (\ref{eq:solution_for_\is}) we have
  $$\xin^N(t)=\exp\pr{\La^N t}\xin^N(0).$$
  This implies that each column of $\exp\pr{\La^N t}$ represents the concentrations of the intermediate species at time $t$ given the initial condition $\xin^N(0)=e_{\ell}$. In turn this implies that the entries of $\exp\pr{\La^N t}$ must be non-negative for any $t>0$, which proves part \eqref{item:exp_non_negative}. Furthermore, the condition $\Lambda^N(s)\equiv 0$ implies that the intermediates are not produced, thus the sum of their concentrations decreases independently on their actual value. Indeed, the stoichiometric coefficients of the intermediate species are one, hence the net flow among intermediates is 0, while they can degrade to produce a non-intermediate complex. These considerations prove parts \eqref{item:exp_to_zero} and \eqref{item:exp_monotone}. Finally, $-\La^N$ is a Z-matrix, as all non-diagonal entries are non-positive. Using  the first and the third Gershgorin Theorems \cite{book:num_an} as in Remark \ref{rem:L_not_singular}, it can be shown that the real parts of the eigenvalues of $-\La^N$ are strictly positive. Namely, for any $N\in\NN$, $-\La^N$ is a non-singular M-matrix and in particular all the entries of $-(\La^N)^{-1}$ are non-negative \cite{book:matrices}. The proof of part \eqref{item:inverse_non_negative} is therefore concluded. For part \eqref{item:exp_monotone_with_inverse}, the former inequality follows from parts \eqref{item:exp_non_negative} and \eqref{item:inverse_non_negative}. The latter inequality follows from
  \begin{align*}
   -\vkappaj^N(\La^N)^{-1}\exp(\La^N t)\xin&=\int_t^\infty \vkappaj^N\exp(\La^N u)\xin du\\
   \leq&\int_s^\infty \vkappaj^N\exp(\La^N u)\xin du\\
   =&-\vkappaj^N(\La^N)^{-1}\exp(\La^N s)\xin,
  \end{align*}
  where the equalities in the first and the third lines follow from part \eqref{item:exp_to_zero}, while the inequality in the second line follows from part \eqref{item:exp_non_negative}. 
 \end{proof}

 \begin{lemma}\label{lem:appendix1}
  Consider the notation introduced in Sections \ref{sec:multiscale} and \ref{sec:reduced}. Let $T>0$ be fixed. Assume that the kinetics $\K^N$, $N\in\NN$, satisfy Assumption \ref{ass:rate_deterministic}. Furthermore, assume that Assumption \ref{ass:rate_to_zero} holds, and that there exists a constant $\Upsilon>0$, such that
  \begin{equation}\label{eq:bound_two_solutions}
  \sup_{\substack{t\in[0,T]\\ N\in\NN}} \|N^{-\alpha}\xni^N(t)\|+\|N^{-\alpha}z^N(t)\| <\Upsilon.
  \end{equation}
  Finally, assume that
  $$\lim_{N\to\infty}\|N^{-\alpha}\xni^N(0)-N^{-\alpha}z^N(0)\|=0 \quad\text{and}\quad \xin^N(0)=0.$$
  Then, for any $y_j\in\C$,
  \begin{equation*}\label{eq:lemma_to_be_proved}
  \lim_{N\to\infty}\sup_{t\in[0,T]}\,-N^{-\alpha}y_{j}\vkappaj^N\int_0^t (\La^N)^{-1}\exp(\La^N(t-s))\Lambda^N(s)ds=0.
  \end{equation*}
 \end{lemma}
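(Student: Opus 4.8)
\emph{Overall strategy.} Write $I^N_j(t)$ for the vector-valued expression whose supremum over $t\in[0,T]$ must be shown to vanish; since $\vkappaj^N=0$ when $y_j\notin\FP$, I may assume $j\in\fp$. The plan is: (i) collapse $\|I^N_j(t)\|$ to a one-dimensional bound; (ii) reduce, for each intermediate $\is_{\ell'}$ produced directly from an initial reactant, to showing $N^{\beta^*_{\ell'}-a_j}\int_0^t\bigl(-\vkappaj^N(\La^N)^{-1}\exp(\La^N u)e_{\ell'}\bigr)\,du\to 0$ uniformly in $t$; (iii) split this integral at the critical scale $\delta_N:=\varepsilon N^{a_j-\beta^*_{\ell'}}$ and control the long-time part with Assumption~\ref{ass:rate_to_zero}.

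For (i)--(ii): by Lemma~\ref{lem:exp_non_negative} the matrix $-(\La^N)^{-1}\exp(\La^N u)$ is entrywise non-negative, and $\vkappaj^N$, $\Lambda^N(\cdot)$ are non-negative, so $I^N_j(t)$ is the fixed vector $y_j$ times a non-negative scalar; hence $\|I^N_j(t)\|=\|N^{-\alpha}y_j\|$ times that scalar, and $\|N^{-\alpha}y_j\|\le N^{-a_j}\|y_j\|$. Expanding $\Lambda^N(\xni^N(s))=\sum_{\ell'\in\ii}\Lambda^N_{\ell'}(\xni^N(s))e_{\ell'}$ and combining \eqref{eq:bound_two_solutions} with \eqref{eq:multiscale_beta} and the local boundedness of the limiting rate functions, I obtain $C>0$ and $N_0$ with $\Lambda^N_{\ell'}(\xni^N(s))\le C N^{\beta^*_{\ell'}}$ for all $N\ge N_0$, $s\in[0,T]$ (vacuous, so no constraint, unless $\is_{\ell'}$ is directly produced). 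After the substitution $u=t-s$ this gives
\[
\|I^N_j(t)\|\le C\|y_j\|\sum_{\ell'\in\ii}N^{\beta^*_{\ell'}-a_j}\int_0^t\bigl(-\vkappaj^N(\La^N)^{-1}\exp(\La^N u)e_{\ell'}\bigr)\,du ,
\]
reducing the lemma to the convergence in (ii) for each directly produced $\is_{\ell'}$.

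For (iii): fix $\varepsilon>0$ and put $\delta_N=\varepsilon N^{a_j-\beta^*_{\ell'}}$. A short computation from the definition of $\La^N$ gives $\sum_{j'\in\fp}\kappa^N_{\cdot j'}=e^\top(-\La^N)$, hence $\sum_{j'\in\fp}\bigl(-\kappa^N_{\cdot j'}(\La^N)^{-1}\bigr)=e^\top$, so every entry of the non-negative row vector $-\vkappaj^N(\La^N)^{-1}$ lies in $[0,1]$. On $[0,\delta_N]$ the monotonicity of Lemma~\ref{lem:exp_non_negative} (base point $0$) bounds the integrand by $-\vkappaj^N(\La^N)^{-1}e_{\ell'}\le 1$, so $N^{\beta^*_{\ell'}-a_j}\int_0^{\delta_N}(\cdots)du\le N^{\beta^*_{\ell'}-a_j}\delta_N=\varepsilon$. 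On $[\delta_N,t]$ (empty if $t\le\delta_N$) the same monotonicity (base point $\delta_N$) bounds the integrand by the $u$-independent quantity $-\vkappaj^N(\La^N)^{-1}\exp(\La^N\delta_N)e_{\ell'}$, so that contribution is at most $T\bigl(-\vkappaj^N(\La^N)^{-1}\exp(\La^N\delta_N)e_{\ell'}\bigr)=T\sum_{\ell\in\ii}\bigl(-\vkappaj^N(\La^N)^{-1}\bigr)_\ell\,e_\ell^\top\exp\!\bigl(\varepsilon N^{a_j-\beta^*_{\ell'}}\La^N\bigr)e_{\ell'}$. A summand is nonzero only if $\bigl(-\vkappaj^N(\La^N)^{-1}\bigr)_\ell\neq 0$, which via $-(\La^N)^{-1}=\int_0^\infty\exp(\La^N v)dv$ and the interpretation of the columns of $\exp(\La^N v)$ as intermediate concentrations (see the proof of Lemma~\ref{lem:exp_non_negative}) forces $\is_\ell\rti y_j$, and only if $e_\ell^\top\exp(\La^N\delta_N)e_{\ell'}\neq 0$, which forces $\is_{\ell'}\rti\is_\ell$. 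For each such $\ell$, with $\is_{\ell'}$ directly produced, Assumption~\ref{ass:rate_to_zero} (for this $\varepsilon$), applied with the roles of $\ell$ and $\ell'$ exchanged, says precisely that $N^{\beta^*_{\ell'}-a_j}e_\ell^\top\exp(\varepsilon N^{a_j-\beta^*_{\ell'}}\La^N)e_{\ell'}\to 0$; since the coefficients $\bigl(-\vkappaj^N(\La^N)^{-1}\bigr)_\ell$ lie in $[0,1]$ and the sum is finite, $N^{\beta^*_{\ell'}-a_j}T\bigl(-\vkappaj^N(\La^N)^{-1}\exp(\La^N\delta_N)e_{\ell'}\bigr)\to 0$. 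Combining, $\limsup_N N^{\beta^*_{\ell'}-a_j}\sup_{t\in[0,T]}\int_0^t(\cdots)du\le\varepsilon$ for every $\varepsilon>0$, hence the limit is $0$, and the displayed bound on $\|I^N_j(t)\|$ completes the argument; all estimates are uniform in $t\in[0,T]$.

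\emph{Main obstacle.} The crux is step (iii): passing from a time-integral of $\exp(\La^N u)$ to the pointwise information provided by Assumption~\ref{ass:rate_to_zero}. Over long times $\exp(\La^N u)$ is only pointwise small --- and only after multiplication by $N^{\beta^*_{\ell'}-a_j}$, precisely at the time $\varepsilon N^{a_j-\beta^*_{\ell'}}$ --- whereas over a short initial window it is of order one, so the cutoff $\delta_N$ has to be tuned to exactly that scale, and the monotonicity from Lemma~\ref{lem:exp_non_negative} is what lets the long-time tail be dominated by the single value at $\delta_N$. One must also be careful not to discard the factor $-\vkappaj^N(\La^N)^{-1}$: its $\ell$-th entry vanishes unless $\is_\ell\rti y_j$, and it is exactly this reachability that licenses the use of Assumption~\ref{ass:rate_to_zero} for the given $j$.
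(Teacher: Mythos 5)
Your proposal is correct and follows essentially the same route as the paper's proof: bound $\Lambda^N_\ell$ by a constant times $N^{\beta^*_\ell}$ via \eqref{eq:multiscale_beta} and \eqref{eq:bound_two_solutions}, reduce to the time-integral of the non-negative scalar $-\vkappaj^N(\La^N)^{-1}\exp(\La^N u)e_\ell$, use the entrywise bound $-\vkappaj^N(\La^N)^{-1}\leq e^\top$ supported on indices with $\is_\ell\rti y_j$, split the integral at the scale $\varepsilon N^{a_j-\beta^*_\ell}$, and invoke Assumption \ref{ass:rate_to_zero} together with the monotonicity of Lemma \ref{lem:exp_non_negative}\eqref{item:exp_monotone_with_inverse} on the tail. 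The only (harmless) deviation is that you work with $a_j$ throughout rather than with each exponent $\alpha_k$ separately, which lets your single split absorb the paper's case distinction $\alpha_k>\beta^*_\ell$ versus $\alpha_k\leq\beta^*_\ell$.
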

 \begin{proof}
 By linearity, what we need to prove is that for every $k$ such that $y_{jk}\neq0$  
  \begin{equation}\label{eq:to_be_proved_1}
   \lim_{N\to\infty}\sup_{t\in[0,T]}\,-N^{-\alpha_k}\vkappaj^N\int_0^t (\La^N)^{-1}\exp(\La^N(t-s))e_\ell\Lambda_\ell^N(s)ds=0.
  \end{equation}
  Moreover, by standard properties of the Laplacian matrix of a graph we have that the entries
  $$(\exp(L^Nt)e_\ell)_{\ell'}\quad\text{and}\quad \pr{(L^N)^{-1}e_\ell}_{\ell'}$$
  are different from zero if and only if $\is_\ell\rti \is_{\ell'}$. For any $j\in\fp$, let $\ii_j\subseteq\ii$ be the set of indices $\ell$ for which $\is_{\ell}\rti y_j$. Note that $\is_{\ell}\rti y_j$ if and only if either $\kappa^N_{\ell j}>0$ or $\is_\ell\rti \is_{\ell'}$ with $\kappa^N_{\ell'j}>0$. Therefore, it suffices to show \eqref{eq:to_be_proved_1} for $k$ such that $y_{jk}\neq0$ and for any  $\ell\in\ii_j$. Moreover, since $\Lambda_\ell^N(s)$ is different form zero only if $y_i\to\is_\ell\in\R$ for some $i\in\sr$, it suffices to prove \eqref{eq:to_be_proved_1} only for those $\ell$ such that $y_i\to\is_\ell\in\R$, for some $i\in\sr$.
  
  By \eqref{eq:multiscale_beta} and \eqref{eq:bound_two_solutions}, there exists a positive constant $B_\ell$ such that
  $$\sup_{\substack{N\in\NN \\ t\in[0,T]}}N^{-\beta^*_\ell}\Lambda_\ell^N(t)<B_\ell.$$
  Therefore, since $-\vkappaj^N(\La^N)^{-1}\exp(\La^N(t-s))$ has all non-negative entries according to Lemma \ref{lem:exp_non_negative}\eqref{item:exp_monotone_with_inverse}, in order to prove \eqref{eq:to_be_proved_1} it suffices to show that
  \begin{equation}\label{eq:to_be_proved_2}
   \lim_{N\to\infty}\sup_{t\in[0,T]}\,-N^{\beta^*_\ell-\alpha_k}\vkappaj^N\int_0^t (\La^N)^{-1}\exp(\La^N(t-s))e_\ell ds=0.
  \end{equation}
  By definition of $\La^N$, we have
  \begin{equation*}
   \kappa^N_{\ell j}\leq\sum_{j\in\fp}\kappa^N_{\ell j}= -\pr{e^\top \La^N}_\ell
  \end{equation*}
  By Lemma \ref{lem:exp_non_negative}\eqref{item:inverse_non_negative} it follows $-\kappa^N_{\cdot j}(\La^N)^{-1}\leq e^\top$. Moreover, if $\ell'\rti y_j$ does not hold, 
  $$\Big(\kappa^N_{\cdot j}(\La^N)^{-1}\Big)_{\ell'}=\sum_{\ell\in\ii}\kappa^N_{\ell j}(\La^N)^{-1}_{\ell \ell'}=0.$$
  Indeed, if it were $\kappa^N_{\ell j}(\La^N)^{-1}_{\ell \ell'}\neq0$ for some $\ell\in\ii$, it would follow $\is_{\ell'}\rti\is_{\ell}$ and $\is_{\ell}\to y_j$, which would in turn imply $\ell'\rti y_j$. Hence, we can conclude
  \begin{equation}\label{eq:k_leq_La^N_new}
   -\kappa^N_{\cdot j}(\La^N)^{-1}\leq \sum_{\ell'\in\ii\,:\, \is_{\ell'}\rti y_j} e^\top_{\ell'} \leq e^\top.
  \end{equation}
  
 We distinguish between two different cases: first, suppose that $\alpha_k>\beta^*_\ell$. Then, we have by application of the first part of Lemma \ref{lem:exp_non_negative}\eqref{item:exp_monotone_with_inverse} for the first inequality and by application of the second part of Lemma \ref{lem:exp_non_negative}\eqref{item:exp_monotone_with_inverse} together with \eqref{eq:k_leq_La^N_new} for the second inequality, that
  $$0\leq \sup_{t\in[0,T]}-N^{\beta^*_\ell-\alpha_k}\vkappaj^N\int_0^t (\La^N)^{-1}\exp(\La^N(t-s))e_\ell ds\leq \sup_{t\in[0,T]}N^{\beta^*_\ell-\alpha_k}\int_0^t e^\top e_\ell ds\leq N^{\beta^*_\ell-\alpha_k}T$$
  and the latter tends to 0 as $N$ tends to infinity, proving \eqref{eq:to_be_proved_2}. If $\alpha_k\leq\beta^*_\ell$, then, due to Lemma \ref{lem:exp_non_negative}\eqref{item:exp_monotone_with_inverse} for the first inequality below, \eqref{eq:k_leq_La^N_new} and Lemma \ref{lem:exp_non_negative}\eqref{item:exp_non_negative} for the second inequality, Lemma \ref{lem:exp_non_negative}\eqref{item:exp_monotone_with_inverse} and \eqref{item:exp_monotone} for the third, and \eqref{eq:k_leq_La^N_new} and Lemma \ref{lem:exp_non_negative}\eqref{item:exp_non_negative} for the forth, we have for any $\varepsilon>0$, 
  \begin{align*}
   0\leq \sup_{t\in[0,T]}-N^{\beta^*_\ell-\alpha_k}\vkappaj^N\int_0^t (\La^N)^{-1}\exp(\La^N(t-s))e_\ell \,ds &\\
   &\hspace*{-119pt}\leq \sup_{t\in[0,T]}-N^{\beta^*_\ell-\alpha_k}\vkappaj^N(\La^N)^{-1}\int_0^{(t-\varepsilon N^{\alpha_k-\beta^*_\ell})\vee 0} \exp(\La^N(t-s))e_\ell \,ds\\
   &\hspace*{-119pt}\quad+\sup_{t\in[0,T]} N^{\beta^*_\ell-\alpha_k}\int_{(t-\varepsilon N^{\alpha_k-\beta^*_\ell})\vee 0}^t e^\top\exp(\La^N(t-s))e_\ell \,ds\\
   &\hspace*{-119pt}\leq N^{\beta^*_\ell-\alpha_k}\pr{-T\vkappaj^N(\La^N)^{-1}\exp(\La^N\varepsilon N^{\alpha_k-\beta^*_\ell})e_\ell+ \varepsilon N^{\alpha_k-\beta^*_\ell} e^\top e_\ell } \\
   &\hspace*{-119pt}\leq N^{\beta^*_\ell-\alpha_k}T\sum_{\ell'\in\ii\,:\, \is_{\ell'}\rti y_j} e^\top_{\ell'}\exp(\La^N\varepsilon N^{\alpha_k-\beta^*_\ell})e_\ell+\varepsilon.
  \end{align*}
  By Assumption \ref{ass:rate_to_zero} the latter tends to $\varepsilon$ as $N$ tends to infinity, and the proof is concluded by the arbitrariness of $\varepsilon>0$.
 \end{proof}

 We are now ready for the proof of Proposition \ref{prop:convergence} and Theorem \ref{thm:convergence}.
 
 \subsection*{Proof of Proposition \ref{prop:mu_suff_ass}}

 Note that, due to Lemma \ref{lem:exp_non_negative}\eqref{item:exp_to_zero},
 \begin{equation}\label{eq:expectation_to_go_to_zero}
 N^{\beta^*_\ell-a_j}\int_0^\infty e_{\ell'}^\top\exp\pr{N^{a_j-\beta^*_\ell} \La^Ns}e_\ell ds=-N^{2\beta^*_\ell-2a_j}e_{\ell'}^\top(\La^N)^{-1}e_\ell, 
 \end{equation}
 for any $\ell,\ell'\in\ii$. By \eqref{eq:multiscale_beta}, for any $i\in\sr$ and $\ell\in\ii$ with $y_i\to \is_\ell\in\R$, $\lambda^N_{i\ell}(N^{\alpha}\xni)=O(N^{\beta_{i \ell}})$ for any $\xni\in\RR^{|\Sp\setminus\In|}$, and there exists $\xni\in\RR^{|\Sp\setminus\In|}$ such that $\lambda^N_{i\ell}(N^{\alpha}\xni)=\Theta(N^{\beta_{i \ell}})$. Therefore, \eqref{eq:expectation_to_go_to_zero} goes to zero as $N$ tends to infinity if and only if
 $$\lim_{N\to\infty}-N^{\beta^*_\ell-2a_j}e_{\ell'}^\top(\La^N)^{-1}e_\ell\lambda^N_{i\ell}(N^{\alpha}\xni)=0,$$
 for any $i\in\sr$ with $y_i\to\is_{\ell}$ and any $\xni\in\RR^{|\Sp\setminus\In|}$. By Lemma \ref{lem:mu_and_inverse}, the latter holds for any $i\in\sr$, $\ell, \ell'\in\ii$ and $j\in\fp$ such that $y_i\to \is_{\ell}$, $\is_{\ell}\rti\is_{\ell'}$ and $\is_{\ell'}\rti y_j$, if and only if
 $$\lim_{N\to\infty}N^{\beta^*_\ell-2a_j}\mu_{i\ell'}^N(N^{\alpha}\xni)=0,$$
 for any $i\in\sr$, $\ell, \ell'\in\ii$ and $j\in\fp$ such that $y_i\to \is_{\ell}$, $\is_{\ell}\rti\is_{\ell'}$ and $\is_{\ell'}\rti y_j$. The latter implies that
 $$\lim_{N\to\infty}N^{\beta^*_\ell-a_j}\int_0^\infty e_{\ell'}^\top\exp\pr{N^{a_j-\beta^*_\ell} \La^Ns}e_\ell ds=0$$
 for any $i\in\sr$, $\ell, \ell'\in\ii$ and $j\in\fp$ such that $y_i\to \is_{\ell}$, $\is_{\ell}\rti\is_{\ell'}$ and $\is_{\ell'}\rti y_j$.
 This in turn implies that Assumption \ref{ass:rate_to_zero} holds, since the entries of $e_{\ell'}^\top\exp\pr{N^{a_j-\beta^*_\ell} \La^Ns}$ are non-negative and non-increasing by Lemma \ref{lem:exp_non_negative}\eqref{item:exp_non_negative} and \eqref{item:exp_monotone}.

 \subsection*{Proof of Proposition \ref{prop:convergence}}
 
  We first assume that \eqref{eq:bound_two_solutions} holds, that is, there exists a constant $\Upsilon>0$ such that
  \begin{equation*}
  \sup_{\substack{t\in[0,T]\\ N\in\NN}}\,\|N^{-\alpha}\xni^N(t)\|+\|N^{-\alpha}z^N(t)\|<\Upsilon.\tag{\ref{eq:bound_two_solutions}}
  \end{equation*}
  We will drop this assumption later. For convenience, we introduce the vector $\overline{\Lambda}^N(t)$ of length $|\In|$ with  entries  indexed by $\In$ and
  $$\overline{\Lambda}^N_\ell(t)=\sum_{i\in\sr}\lambda^N_{i\ell}(z(t)).$$  
  Due to \eqref{eq:delayed_ODE} and Lemma \ref{lem:mu_and_inverse}, we have
  \begin{equation}\label{eq:break}
   \|N^{-\alpha}\xni^N(t)-N^{-\alpha}z^N(t)\|\leq\|N^{-\alpha}\xni^N(0)-N^{-\alpha}z^N(0)\|+ \|A^N(t)\|+\|B^N(t)\|,
  \end{equation}
  where
  \begin{align*}
   A^N(t)=&N^{-\alpha}\int_0^t\pr{\sum_{j\in\fp}y_j\vkappaj^N\int_0^u\exp\pr{\La^N (u-s)}\Lambda^N(s) ds+\sum_{j\in\fp}y_j\vkappaj^N(\La^N)^{-1}\Lambda^N(u)}du\\
 \intertext{and}
   B^N(t)=&N^{-\alpha}\int_0^t\pr{\sum_{j\in\fp}y_j\vkappaj^N(\La^N)^{-1}\overline{\Lambda}^N(u)-\sum_{j\in\fp}y_j\vkappaj^N(\La^N)^{-1}\Lambda^N(u)}du\\
   &+N^{-\alpha}\int_0^t\pr{\sum_{\substack{i\notin\ii \\ 1\leq j\leq |\C|}}\pi(y_j-y_i)\pr{\lambda^N_{ij}(\xni(t))-\lambda^N_{ij}(z(t))}}du.
  \end{align*}
  We have
  \begin{align*}
   A^N(t)=&\sum_{j\in\fp}N^{-\alpha}y_j\vkappaj^N\left[\int_0^t\pr{\int_s^t\exp\pr{\La^N (u-s)}du}\Lambda^N(s) ds +(\La^N)^{-1}\int_0^t \Lambda^N(s)ds \right]\\
   =&\sum_{j\in\fp}N^{-\alpha}y_j\vkappaj^N\left[\int_0^t(\La^N)^{-1}\pr{\exp\pr{\La^N (t-s)}-I}\Lambda^N(s)ds +(\La^N)^{-1}\int_0^t \Lambda^N(s)ds \right]\\
   =&\sum_{j\in\fp}N^{-\alpha}y_j\vkappaj^N\int_0^t(\La^N)^{-1}\exp\pr{\La^N (t-s)}\Lambda^N(s)ds.
  \end{align*} 
 Therefore, by Lemma \ref{lem:appendix1} we have
  \begin{equation}\label{eq:sup_A_to_zero}
  \lim_{N\to\infty}\sup_{t\in[0,T]}\|A^N(t)\|=0. 
  \end{equation}  
  Moreover, it follows from 
  $$\sum_{j\in\fp}\vkappaj^N(\La^N)^{-1}=e^\top\La^N (\La^N)^{-1}=e^\top$$
 (by definition of $\La^N$) or from \eqref{eq:sum}, that $B(t)$ can be written as
  \begin{align*}
   B^N(t)=&\int_0^t\sum_{\substack{i\in\sr\\ j\in\fp}}N^{-\alpha}(y_j-y_i)\pr{\sum_{\ell\in\ii}\vkappaj^N(\La^N)^{-1}e_\ell\Big(\lambda^N_{i\ell}(\xni^N(u))-\lambda^N_{i\ell}(z^N(u))\Big)}du\\
   &+\int_0^t\sum_{i,j\notin\ii}N^{-\alpha}(y_j-y_i)\pr{\lambda^N_{ij}(\xni^N(u))-\lambda^N_{ij}(z^N(u))}du.
  \end{align*}
  In particular, due to \eqref{eq:bound_two_solutions} and \eqref{eq:multiscale_beta}, for any $\varepsilon>0$ and any $N$ large enough
  \begin{align*}
   \|B^N(t)\|\leq&\int_0^t\sum_{\substack{i\in\sr\\ j\in\fp}}\|N^{-\alpha}(y_j-y_i)\|\pr{\sum_{\ell\in\ii}\vkappaj^N(\La^N)^{-1}e_\ell N^{\beta_{i\ell}}\Big\|\lambda_{i\ell}(N^{-\alpha}\xni^N(u))-\lambda_{i\ell}(N^{-\alpha}z^N(u))+\varepsilon\Big\|}du\\
   &+\int_0^t\sum_{i,j\notin\ii}N^{\beta_{ij}}\Big\|N^{-\alpha}(y_j-y_i)\pr{\lambda_{ij}(N^{-\alpha}\xni^N(u))-\lambda_{ij}(N^{-\alpha}z^N(u))+\varepsilon}\Big\|du.
  \end{align*}
  Note that the limit functions $\lambda_{ij}$ are locally Lipschitz, which implies that they are Lipschitz on compact sets contained in their domain. Moreover, from Assumption \ref{ass:single_scale} and  Remark \ref{rem:equivalent}, it follows that there exists two positive constants $0<\Gamma_0,\Gamma_1<\infty$ such that
  $$\|B^N(t)\|\leq \Gamma_0\varepsilon + \Gamma_1\int_0^t\|N^{-\alpha}\xni^N(u)-N^{-\alpha}z^N(u)\|du.$$
  Hence, by \eqref{eq:break} and Gronwall inequality, 
  \begin{align*}
   \|N^{-\alpha}\xni^N(t)-N^{-\alpha}z^N(t)\|\leq& \|N^{-\alpha}\xni^N(0)-N^{-\alpha}z^N(0)\|+\|A^N(t)\|+\Gamma_0\varepsilon\\
   &+\exp(\Gamma_1 t)\int_0^t \pr{\|N^{-\alpha}\xni^N(0)-N^{-\alpha}z^N(0)\|+\|A^N(s)\|+\Gamma_0\varepsilon}ds.
  \end{align*}
  Hence, \eqref{eq:difference_to_zero} follows from \eqref{eq:sup_A_to_zero}, from the arbitrariness of $\varepsilon>0$ and from the hypothesis
  $$\lim_{N\to\infty}\|N^{-\alpha}\xni^N(0)-N^{-\alpha}z^N(0)\|=0.$$
  
  To complete the proof, we need to prove \eqref{eq:difference_to_zero} without assuming \eqref{eq:bound_two_solutions}. We will do so by showing that \eqref{eq:bound_two_solutions}  follows from what we have already shown. By hypothesis we have that there exists a finite positive constant $\Upsilon$ such that 
  \begin{equation}\label{eq:zeta_bounded}
   \sup_{\substack{t\in[0,T]\\ N\in\NN}}\|N^{-\alpha}z^N(t)\|<\Upsilon.
  \end{equation}
We need to prove the existence of an upper bound for the rescaled solutions $N^{-\alpha}x^N(t)$. Choose a constant $0<\delta<1$ and consider the following modified kinetics: for any $1\leq i,j\leq |\C|$ with $y_i\to y_j\in\overline{\R}$, we let
  $$\widetilde{\lambda}^N_{ij}(\xni)=\begin{cases}
                                                            \lambda^N_{ij}(\xni)&\text{if }\|N^{-\alpha}\xni\|\leq \Upsilon+\delta\\
                                                            (1+\|N^{-\alpha}\xni\|-\Upsilon-\delta)\lambda^N_{ij}\pr{\frac{\Upsilon+\delta}{\|N^{-\alpha}\xni\|}\xni}+(\|N^{-\alpha}\xni\|-\Upsilon-\delta)N^{\beta_{ij}}&
                        \text{if }\Upsilon+\delta<\|N^{-\alpha}\xni\|\leq \Upsilon+\delta+1\\
                        N^{\beta_{ij}}&\text{otherwise.}   
                                                           \end{cases}$$
  The consumption rates of intermediate species are not modified, as well as the starting conditions. Let $\widetilde{x}^N(t)$ be the projection  of the solution of the modified system onto the space of the non-intermediate species. The modified kinetics for $(\Sp,\C, \R)$ lead to a new family of kinetics for the reduced reaction network $(\Sp\setminus\In,\C\setminus\In,\Rr)$, which is defined as in \eqref{eq:reduced_rates_deterministic}. The solution to the modified reduced reaction systems, however, are still $z^N(t)$, assuming the same initial condition. Indeed, due to \eqref{eq:zeta_bounded}, the argument of the reaction rates of the modified  kinetics have always norm smaller than $\Upsilon+\delta$, so the changes in the kinetics have no effect.
  On the other hand, as  the convergence in \eqref{eq:multiscale_beta}  is uniform on compact sets, the modified reaction rates are such that $N^{-\beta_ij}\widetilde{\lambda}^N_{ij}(N^{\alpha}\xni)$ converges uniformly to
  $$\widetilde{\lambda}_{ij}(\xni)=\begin{cases}
                                                            \lambda_{ij}(\xni)&\text{if }\|\xni\|\leq \Upsilon+\delta\\
                                                            (1+\|\xni\|-\Upsilon-\delta)\lambda_{ij}\pr{\frac{\Upsilon+\delta}{\|\xni\|}\xni}+(\|\xni\|-\Upsilon-\delta)&
                        \text{if }\Upsilon+\delta<\|\xni\|\leq \Upsilon+\delta+1\\
                        1&\text{otherwise.}   
                                                           \end{cases}$$
  Note that the limit functions $\widetilde{\lambda}_{ij}(\xni)$ are bounded. This implies that the functions $N^{-\beta_ij}\widetilde{\lambda}^N_{ij}(\xni)$ are uniformly bounded by a finite positive constant $\Upsilon'$. Therefore, by \eqref{eq:delayed_ODE}, \eqref{eq:sum} and Lemmas \ref{lem:mu_and_inverse} and \ref{lem:exp_non_negative},  
  $$
   \frac{d}{dt}N^{-\alpha}\widetilde{x}^N(t)\leq \sum_{\substack{i\in\sr\\ j\in\fp}}N^{-\alpha}(y_j-y_i)\sum_{\ell\in\ii}\vkappaj^N\int_0^t\exp\pr{\La (t-s)}e_\ell N^{\beta_{i\ell}}\Upsilon' ds + \sum_{i,j\notin\ii}N^{-\alpha}(y_j-y_i)N^{\beta_{ij}}\Upsilon'.
  $$
 By Assumption \ref{ass:single_scale} and  Remark \ref{rem:equivalent}, we conclude that the derivative $\frac{d}{dt}N^{-\alpha}\widetilde{x}^N(t)$ is uniformly bounded for $t\in[0,T]$ and $N\in\NN$. It follows that there exists a finite constant $\Upsilon''$ such that
 $$\sup_{\substack{t\in[0,T]\\ N\in\NN}}\|N^{-\alpha}\widetilde{x}^N(t)\|<\Upsilon'',$$
 which implies that the solutions $\widetilde{x}^N(t)$ are uniformly bounded for $t\in[0,T]$ and $N\in\NN$. Then, for what we have shown in the first part of the proof,
 $$\lim_{N\to\infty}\sup_{t\in[0,T]}\|N^{-\alpha}\widetilde{x}^N(t)-N^{-\alpha}z^N(t)\|=0.$$
 In particular, this means that for $N$ large enough
 $$\sup_{t\in[0,T]}\|N^{-\alpha}\widetilde{x}^N(t)\|<\Upsilon+\delta.$$
 Hence, for $N$ large enough the modification of the kinetics does not affect the solutions $\widetilde{x}^N(t)$, for $t\in[0,T]$. Therefore, for $N$ large enough we have that for any $t\in[0,T]$
 $$\widetilde{x}^N(t)=\xni^N(t).$$
 It follows that \eqref{eq:bound_two_solutions} holds, by eventually changing $\Upsilon$ to $2\Upsilon+\delta$, and this concludes the proof. \qed
 
 \subsection*{Proof of Theorem \ref{thm:convergence}}

By hypothesis, there exists a finite positive constant $\Upsilon$ such that
\begin{equation}\label{eq:limit_zeta_bounded}
 \sup_{t\in[0,T]}\|z(t)\|<\Upsilon.
\end{equation}
Following the same trick used in the proof of Proposition \ref{prop:convergence}, we consider a modification of the kinetics \eqref{eq:reduced_rates_deterministic} for the reduced reaction network $(\Sp\setminus\In, \C\setminus\In,\Rr)$. Choose a constant $0<\delta<1$ and for each $y_i\to y_j\in\Rr$, define 
  $$\mlambdarN_{ij}(\xni)=\begin{cases}
                                                            \lambdarN_{ij}(\xni)&\text{if }\|N^{-\alpha}\xni\|\leq \Upsilon+\delta\\
                                                            (1+\|N^{-\alpha}\xni\|-\Upsilon-\delta)\lambdarN_{ij}\pr{\frac{\Upsilon+\delta}{\|N^{-\alpha}\xni\|}\xni}+(\|N^{-\alpha}\xni\|-\Upsilon-\delta)N^{\beta_{ij}}&
                        \text{if }\Upsilon+\delta<\|N^{-\alpha}\xni\|\leq \Upsilon+\delta+1\\
                        N^{\beta_{ij}}&\text{otherwise.}   
                                                           \end{cases}$$
Denote by $\widetilde{z}^N(t)$ the solution to the ODE with modified rate functions and initial condition $\widetilde{z}^N(0)=z^N(0)$. By Assumption \ref{ass:limiting_system}, the functions $N^{-\betar_{ij}}\mlambdarN_{ij}(N^{\alpha}\xni)$ converge uniformly to the functions
  $$\mlambdal_{ij}(\xni)=\begin{cases}
                                                            \lambdal_{ij}(\xni)&\text{if }\|\xni\|\leq \Upsilon+\delta\\
                                                            (1+\|\xni\|-\Upsilon-\delta)\lambdal_{ij}\pr{\frac{\Upsilon+\delta}{\|\xni\|}\xni}+(\|\xni\|-\Upsilon-\delta)&
                        \text{if }\Upsilon+\delta<\|\xni\|\leq \Upsilon+\delta+1\\
                        1&\text{otherwise.}   
                                                           \end{cases}$$
By \eqref{eq:limit_zeta_bounded}, the solution of the limiting reaction network $(\Sp\setminus\In,\Cl,\Rl)$ endowed with the modified kinetics and with initial condition $z(0)$ coincide with $z(t)$ on $[0,T]$. Since for any $y_i\to y_j\in \Rr$, we have
$$\lim_{N\to\infty}\sup_{\xni\in\RR_{\geq0}^{|\Sp\setminus\In|}}\left\|N^{-\alpha}(y_j-y_i)\mlambdarN(N^{\alpha}\xni)-(y^{(i,j)}_j-y^{(i,j)}_i)\mlambdal(\xni)\right\|=0,$$
it follows that
$$\lim_{N\to\infty}\sup_{t\in[0,T]}\|N^{-\alpha}\widetilde{z}^N(t)-z(t)\|=0.$$
This in turn implies that, for $N$ large enough,
$$\sup_{t\in[0,T]}\|N^{-\alpha}\widetilde{z}^N(t)\|\leq \Upsilon+\delta.$$
Therefore for $N$ large enough and for any $t\in[0,T]$, we have $\widetilde{z}^N(t)=z^N(t)$. In particular,
$$\lim_{N\to\infty}\sup_{t\in[0,T]}\|N^{-\alpha}z^N(t)-z(t)\|=0.$$
Moreover by Proposition \ref{prop:convergence},
$$\lim_{N\to\infty}\sup_{t\in[0,T]}\|N^{-\alpha}\xni^N(t)-N^{-\alpha}z^N(t)\|=0.$$
The proof is therefore concluded by the triangular inequality. \qed
\bibliographystyle{plain}
\bibliography{bibliography}

\end{document}